\numberwithin{equation}{section}
\newtheorem*{mainthm}{Main Theorem}
\newtheorem{thm}{Theorem}[section]
\newtheorem{cor}[thm]{Corollary}
\newtheorem{lemma}[thm]{Lemma}
\newtheorem{prop}[thm]{Proposition}
\newtheorem{remark}[thm]{Remark}
\theoremstyle{definition}
\newtheorem{defn}[thm]{Definition}
\newtheorem*{remmark}{Remark}
\newcommand{\R}{\mathbb{R}}
\newcommand{\e}{{\varepsilon}}
\begin{document}
\title{\Large{Degenerate diffusion with a drift potential: a viscosity solutions approach, joint work with I.~C.~Kim, truncated version}}
\author{H.~K.~Lei$^1$}
\vspace{-2cm}
\date{}
\maketitle
\vspace{-4mm}
\centerline{${}^1$\textit{Department of Mathematics, California Institute of Technology}}
 
\begin{quote}
{\footnotesize {\bf Abstract: } This is a truncated version of the paper \emph{Degenerate diffusion with a drift potential: a viscosity solutions approach}, co--authored with I.~C.~Kim.  The purpose of this version is to withdraw the claim of quantitative rate of convergence of the free boundary on the part of H.~K.~Lei.  The difference from the previous version lies in Section 3 where 1) the quantitative version of the convergence of the free boundary statement has been removed and 2) the more basic version of some convergence of the free boundary given uniform convergence of the function has been rewritten.}
\end{quote}

\begin{quote}
{\footnotesize We introduce a notion of viscosity solution for a  nonlinear degenerate diffusion equation with a drift potential. We show that our notion of solution coincides with that of the weak solution defined via integration by parts. As an application of the viscosity solutions theory, we show that in the case of a strictly convex potential, the free boundary uniformly converges to equilibrium as $t$ grows.}
\end{quote}

\section{\large{Introduction}}

Consider a $C^2$--potential $\Phi(x):\R^n\to \R$, and consider a nonnegative, continuous function $\rho_0(x):\R^n\to \R$ which has compact support $\Omega_0$. In this paper  we study the porous medium equation with a drift
\begin{equation} \label{main_eq_density}\rho_t = \Delta(\rho^m) + \nabla \cdot (\rho \nabla \Phi),\end{equation}
for $m > 1$, with initial data $\rho_0(x)$.  It will be convenient to change from the density variable to the pressure variable 
\begin{equation}\label{density}
u = \frac{m}{m-1}\rho^{m-1}, \quad u_0 =\frac{m}{m-1}\rho_0^{m-1},
\end{equation}
so that the equation becomes 
$$
u_t = (m-1) u \Delta u + |\nabla u|^2 + \nabla u \cdot \nabla \Phi + (m-1) u \Delta \Phi\leqno\mbox{(PME--D)}
$$
(for more on the density to pressure transform see e.g., the discussions in \cite{BV}).  We consider continuous and nonnegative solutions in the space--time domain $Q = \mathbb R^n \times (0, T)$ for some $T > 0$, with prescribed initial conditions $u(x, 0) = u_0(x) \in C(\mathbf R^n)$.  

When $\Phi\equiv  0$, (PME--D) is the widely-studied Porous Medium Equation (PME): We refer to the book \cite{V} for references. 
Moreover, when $V=|x|^2$, (PME--D) is obtained as a re-scaled version of the (PME) via the transform
$$
\theta(\eta,\tau):=t^{-\alpha}u(x,t), \eta = xt^{-\beta} , \tau = \ln t
$$
(here $u$ solves (PME)). This suggests that the local behavior of (PME--D) is similar to that of (PME), with perturbations due to the inhomogeneity of $\Phi$. We will illustrate this fact in the construction of various barriers in Section 2.

The weak solution theory for (PME--D) in the case of bounded domains has been developed in \cite{BH} and \cite{DiB}.  Also, in \cite{JGJ}, existence and uniqueness of solutions are established for the full space case under reasonable assumptions (either the initial data is compactly supported or the potential has less than quadratic growth at infinity). 

Further, uniform convergence to equilibrium for (PME--D) has also been shown in \cite{BH} (see Theorem 3.1).  In \cite{JT}, the connection between the (PME) and the nonlinear Fokker--Planck equation is established, which facilitates the use of the entropy method to derive an explicit $L^1$ rate of convergence.  In \cite{jose_et_al}, an extensive study is made of a general form of the nonlinear Fokker--Planck equation, i.e., $\rho_t = \nabla \cdot (\nabla\varphi(\rho) + \rho \nabla V)$ with suitable assumptions on $\varphi$ and exponential $L^1$ rate of convergence is obtained.  (PME--D) falls under the framework of \cite{jose_et_al}, and in fact it is the case that almost all of our results would also go through for a general equation of this form, but for ease of exposition we will restrict attention to (PME--D).

We introduce a notion of viscosity solution  for the free boundary problem associated with this equation, which we will show to be equivalent to the usual notion of weak solutions -- see \cite{CIL} for the general theory of viscosity solutions. Note that, formally, the  {\it free boundary}  
\[\Gamma(u):= \partial\{u>0\}\]
 moves with the outward normal velocity
$$
V= \frac{u_t}{|\nabla u|} = (\nabla u+\nabla\Phi)\cdot \frac{\nabla u}{|\nabla u|} = |\nabla u| +\nabla \Phi\cdot\frac{\nabla u}{|\nabla u|},
$$
where the first equality is due to the fact that $u=0$ on $\Gamma(u)$.  In this regard we closely follow the framework and arguments set out in \cite{CV} (see also \cite{Kim} and \cite{BV}), where the viscosity concept is introduced and studied for the Porous Medium Equation.  We point out especially that \cite{BV} extends the result of \cite{CV} to the case where the diffusion term is multiplied by more general nonlinearities; our focus, however, is on the added drift term, which introduces spatial inhomogeneities.  The key utility of the viscosity concept here is that we will be able to describe the pointwise behavior of the free boundary evolution by maximum principle arguments with local barriers. As an application, we are able to extend the results of \cite{BH} and \cite{jose_et_al} to a stronger notion of free boundary convergence.  We summarize our main result in the following:

\begin{mainthm}\label{main_thm}
There exists a viscosity solution $u$ of (PME--D) in the sense of Definition \ref{viscosity} and 
\begin{itemize}
\item[(a)] $u$ is unique and coincides with the weak solution studied in \cite{BH} and \cite{jose_et_al}.\\ 
\item[(b)] Suppose $\Phi$ is strictly convex, then we have convergence of the free boundary:
$\Gamma(u(\cdot, t))$ uniformly converges to $\Gamma(u_\infty)$, where $u_\infty$ is the unique equilibrium solution to which $u$ tends as $t \rightarrow \infty$ (see Theorem \ref{equilibrium_solution}).
\end{itemize}
\end{mainthm}

We will separately (re)state and prove items (a) and (b) in the above as Theorem \ref{exists_unique} and Theorem \ref{conv_fb}.

\vspace{10pt}

\begin{remmark}
~
\begin{itemize}
\item[$\circ$]The free boundary convergence may not hold if $|\nabla \Phi|$ vanishes at some points, even though the uniform convergence of the solution still holds.  This is the reason for assuming $\Phi$ to be strictly convex.

\item[$\circ$]In the case of $\Phi(x)=|x|^2$ (that is for the renormalized (PME))  Lee and Vazquez \cite{LV} showed that the interface becomes convex in finite time.  It is unknown whether such results hold for general convex potentials: we shall investigate this in an upcoming work.
\end{itemize}
\end{remmark}

\section{Viscosity Solution}

In this section we introduce the appropriate notion of viscosity solution for (PME--D) and show that it is equivalent to the usual notion of weak solution.  Our definition descends from those in \cite{CV} and \cite{Kim}.  For more details we also refer the reader to the definitions, discussions and results in \cite{BV}.  

\subsection{Definition and Basic Properties}

Let $Q:= \R^n\times (0,\infty)$.  For a nonnegative function $u(x,t)$ in $Q$, we define the {\it positive phase}
$$
\Omega(u) =\{u>0\},\quad \Omega_t(u):=\{x: u(x,t)>0\}
$$ 
and the {\it free boundary } 
$$
\Gamma(u)=\partial\Omega(u), \quad \Gamma_t(u):=\partial\Omega_t(u).
$$
As in \cite{CV}, to describe the free boundary behavior using comparison arguments we need an appropriate class of test functions to handle the degeneracy of (PME--D).

Let $\Sigma$ be a smooth, cylinder--like domain in $\R^n\times [0,\infty)$, i.e., 
\begin{equation}\label{domain}
\Sigma = \bigcup_{t_1\leq t\leq t_2} \Sigma(t)\times\{t\}, \hbox{ where } \Sigma(t)\hbox{ is a smooth domain in } \R^n.
\end{equation}

\begin{defn}\label{free_bound_soln}
A nonnegative function $u \in C^{2,1}(\overline{\{u>0\}}\cap\Sigma)$ is a {\it classical free boundary subsolution} in $\Sigma$ if 
\begin{itemize}
\item[$\circ$]$u$ satisfies (PME--D) with $\leq$ replacing $=$ in the classical sense in $\{ u > 0\}\cap\Sigma$; 
\item[$\circ$]$|Du|>0$  on $\Gamma(u)\cap\Sigma$ with outward normal velocity
$$
V \leq |\nabla u| + \nabla \Phi \cdot \frac{\nabla u}{|\nabla u|}\hbox{ ~~~on } \Gamma(u),
$$
or, equivalently, 
$$
 u_t \leq |\nabla u|^2 + \nabla \Phi \cdot \nabla u\hbox{ ~~~on } \Gamma(u).
$$
\end{itemize} 
 \medskip
 
We define a {\it classical free boundary supersolution} by replacing $\leq$ with $\geq$.  

Finally, $u$ is a \emph{classical free boundary solution} if it is both a sub-- and supersolution.  
\end{defn}

Before proceeding further it is convenient to introduce some auxiliary definitions.  

\begin{defn}
Let $\varphi$ be a continuous, nonnegative function.  Now if $\psi$ is another such function,  then we say that $\varphi$ {\it touches} $\psi$ {\it from above} at $(x_0, t_0)$ in $\Sigma$ if $\varphi - \psi$ has a local minimum zero at $(x_0,t_0)$ in $\Sigma \cap \{ t \leq t_0\}$.  We have a similar definition for $\varphi$ {\it touching } $\psi$ {\it from below}.
\end{defn}

\begin{defn}[Strictly separated data]
For two nonnegative functions $u, v: \R^n\to \R$,  we write $u_0 \prec v_0$ if the following holds:
$\mbox{supp}(u_0)$ is compact and $\mbox{supp}(u_0) \subset \mbox{Int(supp(}v_0))$ and inside $\mbox{supp}(u_0)$, $u_0(x) < v_0(x)$.   
\end{defn}

  We note that e.g., due to the maximum principle, a classical free boundary subsolution that lies below a classical free--boundary supersolution at time $t_1 \geq 0$ cannot cross the supersolution from below at a later time $t_2 > t_1$.  This observation leads to a notion of viscosity solution which takes into account the free boundary: 
\begin{defn}\label{viscosity}
Let $u$ be a continuous, nonnegative function in $Q$.
\begin{itemize}
\item[$\circ$] $u$ is a \emph{viscosity subsolution} of (PME--D) if, for any given smooth domain $\Sigma$ given in (2.1), for every $\varphi \in C^{2, 1}(\Sigma)$ that touches $u$ from above at the point $(x_0, t_0)$, we have 
\begin{equation}\label{visc_defn_sub}\varphi_t \leq (m-1) \varphi \Delta \varphi + |\nabla \varphi|^2 + \nabla \varphi \cdot \nabla \Phi + (m-1)\varphi \Delta \Phi.\end{equation}

\item[$\circ$] $u$ is a \emph{viscosity supersolution} of (PME--D) if, for any given smooth domain $\Sigma$ as given in (2.1),\\
(i) for every $\varphi \in C^{2, 1}(\Sigma)$ that touches $u$ from below at the point $(x_0, t_0) \in \Omega(u)\cap \Sigma$, we have 
\begin{equation}\label{visc_defn_sup}\varphi_t \geq (m-1) \varphi \Delta \varphi + |\nabla \varphi|^2 + \nabla \varphi \cdot \nabla \Phi + (m-1)\varphi \Delta \Phi.\end{equation}

(ii) for every classical free--boundary subsolution $\varphi$ in $\Sigma$, the following is true:
If $ \varphi \prec u$ on the parabolic boundary of $\Sigma$, then $\varphi \leq u$ in $\Sigma$.  That is, every classical free--boundary subsolution that lies below $u$ at a time $t_1 \geq 0$ cannot cross $u$ at a later time $t_2 > t_1$.\\

\item[$\circ$] $u$ is a {\it viscosity solution} of (PME--D) with initial data $u_0$ if $u$ is both a super-- and subsolution and $u$ uniformly converges to $u_0$ as $t\to 0$.

\end{itemize}
\end{defn}  

\begin{remark} In general one can define viscosity sub-- and supersolutions respectively  as upper-- and lower semicontinuous functions. Such a definition turns out to be useful when one cannot verify continuity of solutions obtained via various limits. This problem does not arise in our investigation here thanks to \cite{BH}, and therefore our definition assumes continuity of solutions.  
\end{remark}

It is fairly straightforward to verify that a classical free boundary sub-- (super)solution is also a viscosity sub-- (super)solution.

\begin{lemma}\label{classical_visc}
If $w$ is a classical free boundary sub-- (super) solution to (PME--D), then $w$ is also a viscosity sub-- (super) solution.
\end{lemma}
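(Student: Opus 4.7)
The plan is to verify the two viscosity conditions directly, splitting the classical--to--viscosity argument for subsolutions (which has one clause) from the argument for supersolutions (which has the two clauses $(i)$ and $(ii)$ in Definition \ref{viscosity}). In every case the work reduces to elementary touching calculus plus the classical information carried by $w$; the only nontrivial step is the comparison clause (ii) for supersolutions.

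For the subsolution direction, suppose $w$ is a classical free boundary subsolution and $\varphi\in C^{2,1}(\Sigma)$ touches $w$ from above at $(x_0,t_0)$. I would split into two cases. First, if $w(x_0,t_0)>0$, then the touching condition is a standard first/second order comparison and yields $\varphi_t\le w_t$, $\nabla\varphi=\nabla w$ and $\Delta\varphi\ge\Delta w$ at $(x_0,t_0)$. Plugging these into the classical PDE inequality for $w$ in $\{w>0\}\cap\Sigma$, and using $\varphi(x_0,t_0)=w(x_0,t_0)\ge 0$ to preserve the sign of the term $(m-1)\varphi\,\Delta\varphi$ when replacing $\Delta w$ by $\Delta\varphi$, gives exactly \eqref{visc_defn_sub}. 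Second, if $w(x_0,t_0)=0$, then $\varphi(x_0,t_0)=0$ and $\varphi\ge w\ge 0$ locally in $\Sigma\cap\{t\le t_0\}$, so $\varphi$ attains a local nonnegative minimum at $(x_0,t_0)$. This forces $\nabla\varphi(x_0,t_0)=0$, $\Delta\varphi(x_0,t_0)\ge 0$ and $\varphi_t(x_0,t_0)\le 0$, so the right--hand side of \eqref{visc_defn_sub} vanishes and the desired inequality $\varphi_t\le 0$ is automatic. Note that in this case I do not even need to invoke the free boundary velocity condition on $w$.

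For the supersolution direction, part $(i)$ is entirely symmetric to the first case above: the condition $(x_0,t_0)\in\Omega(w)$ is precisely what lets me invoke the classical PDE inequality for $w$, and touching from below reverses all three elementary inequalities ($\varphi_t\ge w_t$, $\nabla\varphi=\nabla w$, $\Delta\varphi\le\Delta w$), yielding \eqref{visc_defn_sup} after using $\varphi\ge 0$. For part $(ii)$, given a classical free boundary subsolution $\varphi$ with $\varphi\prec w$ on the parabolic boundary of $\Sigma$, I would run the standard maximum principle argument: let $t^\ast$ be the first time at which $\varphi\le w$ fails in $\Sigma$. At $t^\ast$ either $\varphi$ first touches $w$ at an interior point of $\Omega(w)$, in which case the strict ordering of the PDE operators (classical sub vs. classical super) together with the standard parabolic comparison on $\{\varphi>0\}\cap\{w>0\}$ produces a contradiction; or the positive phases first meet via a contact of the free boundaries, in which case the velocity inequalities in Definition \ref{free_bound_soln} give $V_\varphi\le V_w$ at the contact point and therefore rule out a crossing. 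A small perturbation (for example replacing $\varphi$ by $(1-\e)\varphi$, or shifting $\varphi$ by a time delay) can be used to upgrade the ordering inequalities to strict ones, after which the limit $\e\to 0$ recovers the original claim.

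The main obstacle is clause $(ii)$: the two other parts are pure pointwise touching calculus, but $(ii)$ requires a global comparison principle and, in particular, a careful handling of the first contact between the two free boundaries. This is the only place where the free boundary velocity condition in Definition \ref{free_bound_soln} actually enters, and where one must rely on $|\nabla\varphi|,|\nabla w|>0$ on the respective free boundaries to make the velocity comparison meaningful.
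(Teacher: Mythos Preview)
Your treatment of the subsolution case and of clause (i) for supersolutions is correct and essentially identical to the paper's. The gap is in clause (ii), at the free boundary contact. You assert that the velocity conditions in Definition~\ref{free_bound_soln} directly give $V_\varphi\le V_w$, but those conditions only say $V_\varphi\le |\nabla\varphi|+\nabla\Phi\cdot\nu$ and $V_w\ge |\nabla w|+\nabla\Phi\cdot\nu$ (with common outward normal $\nu$ at the contact point); to chain them you still need $|\nabla\varphi|\le|\nabla w|$. First--order touching yields only this weak inequality, and $V_\varphi\le V_w$ by itself is not a contradiction: the geometry of first contact (the boundary of $\varphi$ has caught up to that of $w$) forces the \emph{opposite} inequality $V_\varphi\ge V_w$, so a \emph{strict} gradient comparison is what closes the argument. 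Your suggested perturbations do not clearly supply this: $(1-\e)\varphi$ need not remain a subsolution, since the operator is not homogeneous (the quadratic terms $(m-1)u\Delta u+|\nabla u|^2$ scale by $(1-\e)^2$ while the rest scales by $(1-\e)$), and a time shift gives no ordering without monotonicity in $t$.

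The paper fills this gap with Hopf's Lemma: both $\varphi$ and $w$ are $C^{2}$ up to their (coincident, smooth) free boundaries, $\varphi\le w$ with equality at the contact point, and $w-\varphi$ is a supersolution of a linear parabolic equation in the common positivity set; Hopf then gives the strict inequality $|\nabla\varphi|<|\nabla w|$ at the contact point, hence $V_\varphi<V_w$, which contradicts $V_\varphi\ge V_w$.
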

\begin{proof}
We will be brief: The subsolution case presents no difficulty since if contact with some $\varphi \in C^{2, 1}(\Sigma)$ occurs in $\Omega(w)$ then we use the fact that $w$ is classical there, whereas no contact can occur on the free boundary unless $|\nabla w| = 0$, in which case the differential inequality is satisfied since then $\varphi = |\nabla \varphi| = 0$ and $\varphi_t \leq 0$.  

If $w$ is a classical free boundary supersolution, then (i) in Definition \ref{viscosity} follows as before.  To see (ii), let us note that if $\varphi$ is a classical free boundary subsolution which crosses $w$, then since the free boundary is $C^2$, Hopf's Lemma implies that at the touching point $|\nabla \varphi| < |\nabla w|$ (see e.g., \cite{max_principle}).  On the other hand, since $\varphi$ started below $w$, at the touching point we must have $v_n(\varphi) \geq v_n(w)$, which leads to a contradiction since it is also the case that we have $\frac{\nabla w}{|\nabla w|} = \frac{\nabla \varphi}{|\nabla \varphi|}$.

\end{proof}

Next we have the following stability result.

\begin{lemma}\label{stability}
Let $u^\e$ be a smooth solution of (PME--D) with initial data $u_0+\e$ and let $u$ be its uniform limit.
Then $u$ is a viscosity solution of (PME--D) with initial data $u_0$. 
\end{lemma}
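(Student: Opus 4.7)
The plan is to verify each item of Definition~\ref{viscosity} separately, using the fact that each $u^\e$ is a strictly positive smooth classical solution of (PME--D) to which the maximum principle applies without free boundary complications. The initial data condition is immediate: since $u^\e(\cdot,0)=u_0+\e\to u_0$ uniformly and $u^\e\to u$ uniformly by hypothesis, we get $u(\cdot,0)=u_0$ and $u(\cdot,t)\to u_0$ uniformly as $t\to 0^+$.

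For the subsolution inequality (\ref{visc_defn_sub}), suppose $\varphi\in C^{2,1}(\Sigma)$ touches $u$ from above at $(x_0,t_0)$. After the standard perturbation $\varphi\mapsto \varphi+\delta(|x-x_0|^2+(t_0-t)^2)$ which enforces a strict local minimum of $\varphi-u$ at $(x_0,t_0)$ in $\Sigma\cap\{t\le t_0\}$, uniform convergence $u^\e\to u$ on a small closed parabolic cylinder around $(x_0,t_0)$ forces $\varphi-u^\e$ to attain its minimum at an interior point $(x_\e,t_\e)\to(x_0,t_0)$. There $\nabla\varphi=\nabla u^\e$, $\varphi_t=u^\e_t$ and $\Delta\varphi\ge\Delta u^\e$, so the equation satisfied classically by $u^\e$, together with $u^\e\ge 0$, yields
\[
\varphi_t = u^\e_t \le (m-1)u^\e\Delta\varphi + |\nabla\varphi|^2 + \nabla\varphi\cdot\nabla\Phi + (m-1)u^\e\Delta\Phi.
\]
Letting $\e\to 0$ (so that $u^\e(x_\e,t_\e)\to \varphi(x_0,t_0)$) and then $\delta\to 0$ gives (\ref{visc_defn_sub}). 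Item (i) of the supersolution definition is analogous: the minimum of $u^\e-\varphi$ stays in $\Omega(u)$, so $u^\e$ is bounded away from $0$ near the touching point, the inequality $\Delta u^\e\ge\Delta\varphi$ is reversed, and no degeneracy issue arises in passing to the limit.

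For item (ii), let $\varphi$ be a classical free boundary subsolution with $\varphi\prec u$ on the parabolic boundary of $\Sigma$. Since $u^\e\to u$ uniformly and the strict separation $\varphi<u$ holds on the compact set $\mbox{supp}(\varphi)$ restricted to the parabolic boundary, we have $\varphi\prec u^\e$ on the parabolic boundary for all small $\e$. We then claim $\varphi\le u^\e$ in $\Sigma$: any first touching point $(x_*,t_*)$ would satisfy $\varphi(x_*,t_*)=u^\e(x_*,t_*)>0$ (using $u^\e>0$ everywhere), so $\varphi$ is classical near $(x_*,t_*)$; subtracting the two PDE (in)equalities and linearizing makes $u^\e-\varphi$ a supersolution of a linear uniformly parabolic equation attaining zero at an interior point, contradicting the strong maximum principle together with strict separation on the parabolic boundary. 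Passing $\e\to 0$ yields $\varphi\le u$ in $\Sigma$.

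The main technical point is item (ii): comparing the classical free boundary subsolution $\varphi$, which can degenerate on its own free boundary, with the approximating $u^\e$. The strict positivity of $u^\e$ is decisive—it rules out touching along $\Gamma(\varphi)$ and reduces the comparison to a standard strong maximum principle argument on the open set $\{\varphi>0\}$, which survives the limit $\e\to 0$ because strict separation on the parabolic boundary is stable under uniform convergence on the compact support of $\varphi$.
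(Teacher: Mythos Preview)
Your proof is correct and follows essentially the same route as the paper's: the subsolution and supersolution-(i) checks are the standard stability argument for viscosity solutions, and for item (ii) you exploit, exactly as the paper does, the strict positivity of $u^\e$ to force any first contact with $\varphi$ into $\{\varphi>0\}$ where the strong maximum principle applies. One small slip: at $(x_\e,t_\e)$ you only have $\varphi_t\le u^\e_t$ (the point may lie on the top $\{t=t_0\}$ of the parabolic cylinder), not equality, but this is harmless since the inequality is what you need.
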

\begin{proof}
Let $\Sigma$ be as given in (\ref{domain}) and let  $\varphi\in C^{2,1}(\Sigma)$. 

\medskip

1. Let us first show that $u$ is a subsolution.  First suppose that $\varphi$ touches $u$ from above at the point $(x_0, t_0)$.  We may assume that $u - \varphi$ has a strict maximum at $(x_0, t_0)$ in $\overline \Omega(u) \cap \Sigma \cap \{t \leq t_0\}$ by replacing $\varphi$ by 
\[ \tilde \varphi(x, t): = \varphi(x, t) + \sigma( (x-x_0)^4 - (t- t_0)^2), ~~~\sigma > 0\]
if necessary.  By uniform convergence there exists a sequence $(x_\varepsilon, t_\varepsilon)$ converging to $(x_0, t_0)$ such that $u^\varepsilon - \varphi$ has a local maximum at $(x_\varepsilon, t_\varepsilon)$.  Now if we we let 
\[ \tilde \varphi(x, t) := \varphi(x, t) - \varphi(x_\varepsilon, t_\varepsilon) + u^\varepsilon(x_\varepsilon, t_\varepsilon)\]
Then $u^\varepsilon - \tilde \varphi$ has a local maximum at $(x_\varepsilon, t_\varepsilon)$ with $(u^\varepsilon-\tilde \varphi)(x_\varepsilon, t_\varepsilon) = 0$.  We can now conclude by taking the limit of the viscosity subsolution property of $u^\varepsilon$. 

\medskip      
2.  Next we show that $u$ is a supersolution.  Let $\varphi$ be a classical free--boundary subsolution such that $\varphi(x, t_1) \prec u(x, t_1)$.  Since the $u_\varepsilon$'s are strictly ordered, $u < u_\varepsilon$ and hence $\varphi(x, t_1) \prec u(x, t_1) < u_\varepsilon(x, t_1)$.  Now suppose $\varphi$ touches $u_\varepsilon$ at some point $(x_2, t_2)$, then $\varphi(x_2, t_2) > 0$ since $u_\varepsilon$ is positive, so by continuity, there is a parabolic neighborhood of $(x_2, t_2)$ in which both functions are classical and positive.  By the Strong Maximum Principle, the touching cannot have occurred at $(x_2, t_2)$, a contradiction.  We conclude that $\varphi < u_\varepsilon$ so that in the limit $\varphi \leq u_\varepsilon$.

\end{proof}
An immediate consequence of the above lemma is that weak solutions are viscosity solutions (see Corollary \ref{weak}). We shall introduce the precise notion of weak solutions in the next subsection, and summarize some results from \cite{BH}.

\subsection{Weak Solutions}
To be consistent with the setup in both \cite{BH} and \cite{jose_et_al}, let us return to the density variable and consider the solution of \eqref{main_eq_density} in a bounded domain $\Omega$ with Neumann boundary condition: 
$$
\left\{\begin{array}{lll}
\rho_t = \Delta \rho^m + \nabla \cdot (\rho \nabla \Phi) &\hbox{in }& \Omega \times \mathbf R^+,\\ \\
 \partial \rho^m/\partial \nu + \rho \left(\partial \Phi/\partial \nu\right) = 0 &\mbox{ on }& \partial \Omega \times \mathbf R^+,\\ \\
 \rho(x, 0) = \rho_0(x) &\hbox{ in }&\Omega.
\end{array}\right. \leqno (N)
$$
We will see shortly that we need not worry about the fact that we are on a bounded domain, but for now we will let $Q = \Omega \times \mathbf R^+$ and $Q_t= \Omega \times (0, t]$.  As in \cite{BH}, we make the following definition:
\begin{defn}
We say $\rho: [0, \infty) \rightarrow L^1(\Omega)$ is a \emph{weak solution} of (N) if\\
 
(i) $\rho \in C([0, t]; L^1(\Omega)) \cap L^\infty(Q_t) \mbox{ for all } t \in (0, \infty)$;\\

(ii) for all test functions $\varphi \in C^{2,1}(\overline Q)$ such that $\varphi \geq 0$ in $Q$ and $\partial \varphi/\partial \nu = 0$ on $\partial \Omega \times \mathbf R^+$, we have 
$$
\int_\Omega \rho(t) \varphi(t) = \int_\Omega \rho(0) \varphi(0) + \int \int_{Q_t} (\rho \varphi_t + \rho^m \Delta \varphi - \rho \nabla \Phi \cdot \nabla \varphi)
$$
We also define a weak subsolution (respectively supersolution) by (i) and (ii) with equality replaced by $\leq$ (respectively $\geq$).
\end{defn}

From \cite{BH} we have existence, regularity, uniqueness and comparison principle for weak solutions:
\begin{thm}[From \cite{BH}]\label{weak_soln_results}
Under the assumption that  $\Phi$ is $C^2$ in $\bar{\Omega}$: 
\begin{itemize}
\item[(a)] the problem (N) has a unique solution;\\
\item[(b)] the solution is uniformly bounded in $Q$ and is continuous in any set $\overline \Omega \times [0, T]$;\\
\item[(c)] suppose $\underline \rho(t)$ is a subsolution and $\overline \rho(t)$ is a supersolution, then if $\underline \rho_0 \leq \overline \rho_0$ in $\Omega$, then $\underline \rho(t) \leq \overline \rho(t)$ in $\Omega$ for $t \geq 0$. 
\end{itemize}  
\end{thm}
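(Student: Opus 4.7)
The theorem bundles three essentially independent pieces of information: existence, uniqueness (via comparison), and boundedness together with continuity. I would obtain all three through a vanishing--viscosity approximation in the classical style of Sabinina and Vazquez for the PME, modified to accommodate the smooth drift $\nabla\Phi$ and the Neumann boundary condition.

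My plan for existence and the uniform $L^\infty$ bound is to solve the nondegenerate, uniformly parabolic problem
$$\rho^\varepsilon_t = \Delta\bigl((\rho^\varepsilon)^m+\varepsilon\rho^\varepsilon\bigr) + \nabla\cdot(\rho^\varepsilon \nabla\Phi)\quad \text{in } Q, \qquad \tfrac{\partial}{\partial\nu}\bigl((\rho^\varepsilon)^m+\varepsilon\rho^\varepsilon\bigr)+\rho^\varepsilon\tfrac{\partial\Phi}{\partial\nu}=0 \text{ on }\partial\Omega,$$
with a smoothed, strictly positive initial datum $\rho_0^\varepsilon\to\rho_0$. Standard quasilinear parabolic theory (Ladyzhenskaya--Solonnikov--Ural'tseva) then delivers a classical solution $\rho^\varepsilon>0$. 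Since $\Phi\in C^2(\bar\Omega)$, the drift is a bounded lower--order perturbation, and comparison with spatially constant super/subsolutions, combined with mass conservation coming from the compatible Neumann data, yields an $L^\infty$ bound uniform in $\varepsilon$. Passing to the limit $\varepsilon\downarrow 0$ in the weak formulation is then routine once I have a.e. (or uniform) convergence, since the nonlinear terms $(\rho^\varepsilon)^m$ and $\rho^\varepsilon\nabla\Phi$ are continuous in $\rho^\varepsilon$.

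For the comparison statement (c), and hence uniqueness in (a), I would use Oleinik's duality device. Subtract the weak formulations for the sub-- and supersolutions, set $w:=\underline\rho-\overline\rho$, and factor
$$\underline\rho^{\,m}-\overline\rho^{\,m}=a(x,t)\,w,\qquad a(x,t)=m\!\int_0^1\!\bigl(s\underline\rho+(1-s)\overline\rho\bigr)^{m-1}\,ds\geq 0.$$
Then substitute a test function $\varphi$ obtained by solving the \emph{backward} adjoint problem
$$\varphi_s+a_\delta\Delta\varphi-\nabla\Phi\cdot\nabla\varphi=0 \text{ in }Q_t,\qquad \varphi(\cdot,t)=\chi\in C_c^\infty(\Omega),\ \ 0\leq\chi\leq1,\qquad \tfrac{\partial\varphi}{\partial\nu}=0,$$
with $a_\delta\geq\delta>0$ a smooth mollification of $a$. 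Standard energy estimates bound $\|\varphi\|_\infty$ and show $\iint_{Q_t} w\,(a-a_\delta)\Delta\varphi\to 0$ as $\delta\downarrow 0$, yielding $\int_\Omega w^+(t)\chi\leq 0$. Letting $\chi$ run through a partition of unity in $\Omega$ gives $\underline\rho\leq\overline\rho$ a.e., from which (c) follows, and uniqueness in (a) is the special case of two solutions.

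\textbf{Main obstacle.} The genuinely delicate ingredient is the continuity statement in (b): the uniform $L^\infty$ bound and the weak formulation alone do not prevent $\rho$ from oscillating where it meets zero. I would invoke DiBenedetto's intrinsic--scaling machinery to produce uniform Hölder estimates for the $\rho^\varepsilon$, and then apply Arzela--Ascoli to pick out a continuous limit. Because $\nabla\Phi$ and $\Delta\Phi$ are bounded, the drift enters the De Giorgi/Moser iteration only as a controllable lower--order term; the most technical bookkeeping is at the boundary, where the Neumann condition has to be tracked through the intrinsic rescaling. This is exactly the core content of \cite{BH} (together with \cite{DiB}), and I would cite rather than reproduce their arguments at this step.
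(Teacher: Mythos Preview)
Your sketch is sound and matches the standard approach, but note that the paper does not actually prove this theorem: it is quoted verbatim as a result \emph{from} \cite{BH}, with only the one-line remark afterward that ``existence of solutions is obtained as the uniform limit of solutions to uniformly parabolic problems (equicontinuity is obtained from \cite{DiB}).'' Your vanishing-viscosity construction, Oleinik-type duality argument for comparison/uniqueness, and appeal to DiBenedetto's intrinsic scaling for continuity are exactly the ingredients of the Bertsch--Hilhorst proof the paper is citing, so there is nothing to compare beyond that.
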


The existence of solutions is obtained as the uniform limit of solutions to uniformly parabolic problems (equicontinuity is obtained from \cite{DiB}).  For our purposes, a very simple approximation basically suffices and we summarize the relevant result in the following:

\begin{lemma}[From \cite{BH}]\label{equicontinuity}
Let $u^\varepsilon$ be a solution of (N) with initial data $u_0^\varepsilon = u_0 + \varepsilon$, then $u^\varepsilon$ is equicontinuous and there exists a subsequence which uniformly converge to $u$ which is the unique weak solution to (N) with initial data $u_0$.  
\end{lemma}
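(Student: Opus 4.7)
My plan is to lay out the standard vanishing viscosity approximation scheme and explain how each ingredient is delivered by the Bertsch--Hilhorst and DiBenedetto results cited in the lemma.

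The first step is to observe that the shifted initial datum $u_0^\varepsilon = u_0+\varepsilon$ is uniformly bounded below by $\varepsilon>0$. Passing to the density variable via \eqref{density}, the equation \eqref{main_eq_density} then becomes uniformly parabolic on the set where $\rho \geq c\varepsilon^{1/(m-1)}$, and a maximum principle argument (combined with control of the drift terms coming from $\nabla\Phi$ and $\Delta\Phi$ on the bounded domain $\Omega$) keeps $u^\varepsilon$ bounded above by a constant depending only on $\|u_0\|_\infty$, $\Phi$, and $|\Omega|$, but not on $\varepsilon$. Thus standard quasilinear parabolic theory produces a classical solution $u^\varepsilon \in C^{2,1}(\overline Q)$ with a uniform $L^\infty$ bound.

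The next and crucial step is equicontinuity uniform in $\varepsilon$. This is where I would invoke the intrinsic scaling estimates of DiBenedetto recalled in \cite{BH}: although the uniform parabolicity constants degenerate as $\varepsilon\to 0$, the resulting H\"older modulus of continuity for solutions of the porous medium type equation depends only on the $L^\infty$ bound of the solution and on $\Phi$, not on the degeneracy. Combined with Step 1 this yields a modulus of continuity for the family $\{u^\varepsilon\}$ on $\overline Q_t$ that is independent of $\varepsilon$. An Arzel\`a--Ascoli argument (with a diagonalization over an exhaustion in time) then extracts a subsequence $u^{\varepsilon_k}$ that converges uniformly on every $\overline Q_t$ to a continuous nonnegative limit $u$.

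Finally, since each $u^{\varepsilon_k}$ satisfies the weak formulation and uniform convergence allows one to pass to the limit in every term (the nonlinearity $\rho\mapsto\rho^m$ is continuous and the test functions are smooth), the limit $u$ is a weak solution of (N); the initial condition $u_0$ is recovered from the uniform convergence of $u_0^{\varepsilon_k} = u_0 + \varepsilon_k$ at $t=0$. By Theorem \ref{weak_soln_results}(a), weak solutions of (N) are unique, so the limit $u$ does not depend on the extracted subsequence and the full family $u^\varepsilon$ converges. The main obstacle is Step 2: a naive Schauder estimate would blow up as $\varepsilon\to 0$, so one really must appeal to the intrinsic scaling H\"older estimates in \cite{DiB} to obtain a modulus independent of the degeneracy, which is precisely what \cite{BH} supplies.
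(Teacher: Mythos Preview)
Your proposal is correct and matches the paper's treatment: the paper does not give its own proof of this lemma but simply attributes it to \cite{BH}, noting in the preceding sentence that ``existence of solutions is obtained as the uniform limit of solutions to uniformly parabolic problems (equicontinuity is obtained from \cite{DiB}).'' Your outline---uniform $L^\infty$ bounds from the maximum principle, $\varepsilon$-independent H\"older modulus from DiBenedetto's intrinsic scaling estimates, Arzel\`a--Ascoli extraction, passage to the limit in the weak formulation, and identification via the uniqueness in Theorem~\ref{weak_soln_results}(a)---is exactly the standard scheme underlying the cited result.
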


While \emph{a priori} our viscosity solution is defined in all of $\mathbb R^n$, since (formally at least) solutions of (PME--D) should have finite speed of propagation, the boundary conditions should be inconsequential if we take $\Omega$ sufficiently large. (Later we will also establish finite propagation for viscosity solutions -- see Corollary \ref{finite_propagation}.)  Control on the speed of expansion of the support can be done via comparison with any (weak) supersolution.  In particular, when $\Phi$ is convex, we can use the stationary profiles of the form $\Psi(x) = (C-\Phi)_+$ with sufficiently large $C$ as a supersolution (see Theorem~\ref{equilibrium_solution}).  

\begin{remark}
Alternatively (and perhaps this is a cleaner line of reasoning), we can directly use the result of \cite{JGJ} on existence and uniqueness of solutions in all of $\mathbb R^n$, which implies in particular that the results of \cite{jose_et_al} also apply in that setting.
\end{remark}

Combining Lemma \ref{stability} with Lemma \ref{equicontinuity} and the uniqueness statement in Theorem \ref{weak_soln_results}, we obtain:
\begin{cor}\label{weak}
Any weak solution is also a viscosity solution.  
\end{cor}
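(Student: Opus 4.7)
The plan is to bootstrap directly from the three ingredients already assembled: Lemma \ref{stability}, Lemma \ref{equicontinuity}, and the uniqueness statement in Theorem \ref{weak_soln_results}(a). Given a weak solution $u$ with initial data $u_0$, first I would introduce the approximating family $\{u^\varepsilon\}$ of solutions of (N) (or (PME--D) in pressure form) with strictly positive initial data $u_0 + \varepsilon$. Since the initial data is bounded away from zero, the equation is uniformly parabolic on the support, and classical parabolic regularity theory produces a smooth solution $u^\varepsilon$; this is the class of approximants for which Lemma \ref{stability} is formulated.

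Next I would invoke Lemma \ref{equicontinuity}: the family $\{u^\varepsilon\}$ is equicontinuous, and hence admits a subsequence converging uniformly to some limit $\tilde u$, which is a weak solution to (N) with initial data $u_0$. By the uniqueness statement in Theorem \ref{weak_soln_results}(a), $\tilde u = u$. Consequently the weak solution $u$ is realized as the uniform limit of the smooth solutions $u^\varepsilon$, and Lemma \ref{stability} applies verbatim to conclude that $u$ is a viscosity solution in the sense of Definition \ref{viscosity}.

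The only subtleties to attend to are formal: one has to check that uniform convergence of $u^\varepsilon$ to $u$ on compact sets of $Q$ implies uniform convergence as $t \to 0$ to $u_0$ (needed to match the definition of a viscosity solution), which follows from the continuity clause $u \in C([0,t];L^1) \cap C(\overline\Omega\times[0,T])$ in Theorem \ref{weak_soln_results}(b) together with the fact that $u^\varepsilon(\cdot,0) = u_0 + \varepsilon \to u_0$ uniformly. One should also keep in mind the translation between the density variable $\rho$ (in which the weak solution is defined) and the pressure variable $u$ (in which the viscosity notion is framed) via \eqref{density}; since this transform is a homeomorphism on nonnegative continuous functions, uniform convergence in one variable gives uniform convergence in the other.

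I do not expect a genuine obstacle here: the corollary is essentially a bookkeeping statement that assembles three previously established facts. The only real content lies in Lemma \ref{stability}, which has already been proved, so no new barrier constructions or comparison arguments are required. If anything, the main care is in making sure the approximating sequence chosen to witness uniqueness of the weak solution is the same sequence to which Lemma \ref{stability} is applied — which it is, by construction.
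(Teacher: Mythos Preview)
Your proposal is correct and follows exactly the route the paper takes: it is simply the concatenation of Lemma~\ref{stability}, Lemma~\ref{equicontinuity}, and the uniqueness part of Theorem~\ref{weak_soln_results}, with the extra care you note about the density--pressure transform and the initial trace being harmless bookkeeping. There is nothing to add.
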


We will eventually establish uniqueness of viscosity solutions via maximum--principle type arguments, which culminates in the identification of the two notions of solution.

\subsection{Construction of test functions }\label{barriers}

In this subsection we collect some test functions, i.e., (classical free boundary) sub-- (super) solutions, to (PME--D) which will be useful for comparison purposes.  In the first couple of lemmas (Lemmas \ref{parabolic_super_barrier} and \ref{parabolic_sub_barrier}) the idea is to control the $\Phi$ dependence via Taylor expansion in a small neighborhood of a point, so that we can appropriately perturb the test functions for (PME) constructed in \cite{BV} and \cite{CV} for our purposes.  

Let $\alpha > 0$ be small.  We will first explain how locally the (PME--D) can be viewed as a perturbation of (PME). The starting point is to observe that if $u$ is a solution of (PME--D) in the parabolic cylinder
\[ Q_\alpha (x_0, t_0) = B(x_0, \alpha) \times [t_0, t_0 + \alpha]. \]
By Taylor's theorem, $u$ satisfies an equation
\[u_t = (m-1)u \Delta u + |\nabla u|^2 + \vec b \cdot \nabla u +  (m-1)u \Delta \Phi + O(\alpha)[D^2\Phi] \nabla u + O(\alpha^2),\]
where 
\[\vec b = \nabla \Phi(x_0, t_0)\]
and $[D^2\Phi]$ denotes the relevant term in Taylor's expansion for $\nabla \Phi$.

On the other hand, if we use hyperbolic scaling and define 
\[v(x, t) = \alpha^{-1}u(\alpha (x-x_0), \alpha (t-t_0)),\]
then $v$ satisfies
\[ v_t = (m-1) v\Delta v + |\nabla v|^2 + \nabla v \cdot \nabla \Phi + \alpha(m-1) v \Delta \Phi\]
in the parabolic cylinder
\[ Q_1(0, 0) = B(0, 1) \times [0, 1].\]
Combining this with the Taylor expansion, we see that $v$ satisfies 
\[\begin{split} v_t &= (m-1) v\Delta v + |\nabla v|^2 + \vec b \cdot \nabla u + \alpha (m-1) v \Delta \Phi + O(\alpha) [D^2 \Phi] \nabla v + O(\alpha^2)\\
&:= (m-1) v\Delta v + |\nabla v|^2 + \vec b \cdot \nabla u + R(x_0, v).\end{split}\]
From Taylor's theorem, we see that 
\begin{equation}\label{error_est} |R(x_0, v)| \lesssim \alpha\|\Phi\|_{C^2} (|v| + |\nabla v|).\end{equation}
Finally, we define 
\[ w(x, t) = v(x - \vec b t, t).\]
Then $w$ satisfies 
\begin{equation}\label{final_w} w_t = (m-1) w \Delta w + |\nabla w|^2 + R(x_0, w), \end{equation}
which is indeed the (PME) with an $O(\alpha)$ perturbation in the unit scale parabolic cylinder
\[ Q_1(\vec b t, 0) = B(\vec b t, 1) \times [0, 1].\]

%

To construct test functions for (PME--D) from perturbations of solutions of (PME) we shall have to reverse the order of operations:  We will construct super-- (sub)solutions of \eqref{final_w} by perturbing corresponding super-- (sub)solutions of the (PME) at the unit scale and then we will use hyperbolic scaling to arrive at a rescaled (and translated) local super-- (sub)solution for the (PME--D).  Let us note for future reference that 
\[ u(x, t) = \alpha w \left(\alpha^{-1}(x + \vec b t), \alpha^{-1}t\right).\]

The next proposition gives one way to perform the necessary perturbation on solutions of (PME) to arrive at \eqref{final_w}.  

\begin{prop}\label{prop:perturb}
Let $u(x,t)$ be a viscosity subsolution of  (PME) in $B_{1+\alpha}(0)\times [-1,1]$. Then for $0<\alpha<1$,
 $$
 u_1(x,t) := e^{-\alpha t} \sup_{y\in B_{\alpha-\alpha t} (x)}  u(y,t)
 $$ is a subsolution of
$$
(u_1)_t =(m-1) u_1\Delta u_1+ |\nabla u_1|^2 - \alpha|\nabla u_1| - \alpha u_1\leqno \mbox{(PME--sub)}
$$
in $B_1(0)\times [-1,1]$.
Similarly, if $u(x,t)$ is a viscosity supersolution of (PME) in $B_1(0)\times [-1,1]$, then for $0<\alpha<1$,

$$
u_2(x,t):= e^{\alpha t} \inf_{y\in B_{\alpha-\alpha t} (x) }  u(y,t)
$$ is a supersolution of 
$$
(u_2)_t = (m-1)u_2\Delta u_2 + |\nabla u_2|^2 +\alpha|\nabla u_2|+\alpha u_2 \leqno \mbox{(PME--super)}
$$
in $B_1(0)\times [-1,1]$.
\end{prop}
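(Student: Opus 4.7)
The plan is to prove only the subsolution statement about $u_1$; the supersolution statement for $u_2$ follows by the symmetric inf-convolution argument. Set $r(t) := \alpha(1 - t)$, so $r'(t) = -\alpha$, and fix a test function $\varphi \in C^{2,1}$ touching $u_1$ from above at some $(x_0, t_0) \in B_1(0) \times [-1,1]$. By upper semicontinuity of $u$ and compactness, the defining supremum is attained at some $y_0 \in \overline{B_{r(t_0)}(x_0)}$, so that $e^{-\alpha t_0}u(y_0, t_0) = u_1(x_0, t_0) = \varphi(x_0, t_0)$. Write $z_0 := y_0 - x_0$ and, when $z_0 \neq 0$, $\hat z_0 := z_0/|z_0|$.

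The heart of the proof is to transfer the test-function inequality from $u_1$ at $(x_0, t_0)$ to $u$ at $(y_0, t_0)$, where the viscosity subsolution property for (PME) can be applied. To this end I would define
\[
\psi(y, t) := e^{\alpha t}\, \varphi\bigl(y - z(t),\, t\bigr), \qquad z(t) := \frac{r(t)}{r(t_0)}\, z_0,
\]
so that $z(t_0) = z_0$ and $|z(t)| = r(t)$ for $t$ near $t_0$. For $(y, t)$ close to $(y_0, t_0)$, setting $x := y - z(t)$ places $y$ in $\overline{B_{r(t)}(x)}$, so by the definition of $u_1$ and the touching hypothesis $u(y, t) \leq e^{\alpha t} u_1(x, t) \leq e^{\alpha t}\varphi(x, t) = \psi(y, t)$, with equality at $(y_0, t_0)$. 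Hence $\psi$ is a smooth test function touching $u$ from above at $(y_0, t_0)$, and the (PME) subsolution property gives $\psi_t \leq (m-1)\psi\Delta\psi + |\nabla\psi|^2$ there. A direct computation using $z'(t_0) = -\alpha\hat z_0$ yields at $(y_0, t_0)$, with $\varphi$ and its derivatives evaluated at $(x_0, t_0)$,
\[
\nabla\psi = e^{\alpha t_0}\nabla\varphi, \qquad \Delta\psi = e^{\alpha t_0}\Delta\varphi, \qquad \psi_t = e^{\alpha t_0}\bigl[\alpha\varphi + \varphi_t + \alpha\,\nabla\varphi\cdot\hat z_0\bigr].
\]

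The anticipated main obstacle is showing that $\nabla\varphi(x_0, t_0)$ is parallel to $\hat z_0$, i.e.\ $\nabla\varphi\cdot\hat z_0 = |\nabla\varphi|$; once this is in hand, substituting and dividing by $e^{\alpha t_0}$ produces the (PME--sub) inequality for $\varphi$ at $(x_0, t_0)$. The alignment is forced by the sup-convolution geometry: shifting $x_0$ infinitesimally along $\hat z_0$ keeps the maximizer $y_0$ inside $\overline{B_{r(t_0)}(x)}$, so $u_1(\cdot,t_0)$ is non-decreasing along $\hat z_0$, while shifting orthogonal to $\hat z_0$ strictly ejects $y_0$ from the ball; thus the level sets of $u_1$ through $x_0$ are perpendicular to $\hat z_0$, and since $\varphi - u_1$ has a local minimum at $(x_0, t_0)$, $\nabla\varphi = |\nabla u_1|\hat z_0$ whenever nonzero. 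The degenerate sub-cases are straightforward: if $y_0$ lies strictly interior to $\overline{B_{r(t_0)}(x_0)}$, then $u_1$ is locally constant in $x$, $\nabla\varphi(x_0,t_0) = 0$, and the inequality reduces to the plain subsolution property at $(y_0, t_0)$; the same argument covers the coincidental case $\nabla\varphi = 0$. A minor bookkeeping point is the residual factor $e^{\alpha t_0} = 1 + O(\alpha)$ appearing on the right-hand side after dividing, which on the bounded time interval $[-1,1]$ is absorbed in the standard way into the $-\alpha|\nabla u_1| - \alpha u_1$ perturbation terms.
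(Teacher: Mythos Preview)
Your approach is essentially correct and parallels the paper's, but it diverges at one point and there the argument you give has a gap.

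The paper (working the supersolution half) transports the test function with a shift $x\mapsto x-(x_1-x_0)-\alpha\hat b(t-t_0)$ where $\hat b$ is an \emph{arbitrary} unit vector; because the radius $r(t)=\alpha(1-t)$ shrinks at speed $\alpha$, one checks that for $t\le t_0$ the shifted point stays in the admissible ball regardless of $\hat b$.  The resulting differential inequality then contains the term $\alpha\,\hat b\cdot\nabla\varphi$, and choosing $\hat b=\nabla\varphi/|\nabla\varphi|$ at the very end produces $\alpha|\nabla\varphi|$ without any further geometric argument.  Your choice $z(t)=\frac{r(t)}{r(t_0)}z_0$ commits the shift direction to $\hat z_0$ from the start, so you are forced to prove the alignment $\nabla\varphi(x_0,t_0)\cdot\hat z_0=|\nabla\varphi(x_0,t_0)|$.

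That alignment claim is in fact true, but your justification is not: ``shifting orthogonal to $\hat z_0$ strictly ejects $y_0$ from the ball; thus the level sets of $u_1$ are perpendicular to $\hat z_0$'' is a non sequitur, since after ejecting $y_0$ the supremum may be attained at a different point and $u_1$ need not be constant in that direction.  A correct argument runs as follows.  Suppose $p:=\nabla\varphi(x_0,t_0)\neq 0$ is not a positive multiple of $\hat z_0$; then there is a unit vector $v$ with $p\cdot v>0$ and $\hat z_0\cdot v<0$.  For small $s>0$ one has $|y_0-(x_0-sv)|^2=r(t_0)^2+2s\,r(t_0)\,\hat z_0\cdot v+s^2<r(t_0)^2$, so $y_0\in B_{r(t_0)}(x_0-sv)$ and hence $u_1(x_0-sv,t_0)\ge e^{-\alpha t_0}u(y_0,t_0)=u_1(x_0,t_0)$.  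On the other hand $\varphi(x_0-sv,t_0)=\varphi(x_0,t_0)-s\,p\cdot v+O(s^2)<u_1(x_0,t_0)$ for small $s$, contradicting $\varphi\ge u_1$.  This fixes the gap; alternatively, adopting the paper's free-$\hat b$ trick avoids it entirely.  (The residual factor $e^{\alpha t_0}$ you flag is present in the paper's computation as well and is tacitly absorbed in the same way.)
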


\begin{proof}
We only show the supersolution part. Let $u_2$ be as given above.
Suppose then that $\varphi$ is classical and touches $u_2$ from below at some point $(x_0, t_0)$.  We first note that there exists $x_1\in \overline B_{\alpha - \alpha t} (x_0)$ such that $u_2(x_0, t_0) = e^{\alpha t_1}u(x_1, t_0)$.

Next for any unit vector $\hat{b}$, let us consider 
\[\tilde \varphi(x, t) = e^{-\alpha t}\varphi(x - (x_1 - x_0) - \alpha \hat{b} (t - t_0),  t).\]
Then we note that 1) $\tilde \varphi(x_1, t_0) = e^{-\alpha t_0}\varphi(x_0, t_0)$ and so $(u- \tilde \varphi)(x_1, t_0) = 0$ and 2) by the definition of $u_2$ as an infimum and by continuity of $\varphi$, in a small parabolic neighborhood of $(x_0, t_0)$, it is the case that $u - \tilde \varphi \geq u_2 - \tilde \varphi \geq 0$; we therefore conclude that $\tilde \varphi$ touches $u$ from below at $(x_1, t_0)$ and so we have, 
$$
\begin{array}{lll}
[\varphi \Delta \varphi + |\nabla \varphi|^2](x_0, t_0) &=&e^{\alpha t_0} [\tilde \varphi \Delta \tilde \varphi + |\nabla \tilde \varphi|^2](x_1, t_0) \\ \\
&\leq& e^{\alpha t_0}\tilde \varphi_t(x_1, t_0) \\ \\
&=&[\varphi_t -\alpha\varphi-\alpha \hat b \cdot \nabla \varphi](x_0, t_0).
\end{array}
$$  
Now the desired inequality is achieved by setting $\hat b = \frac{\nabla \varphi}{|\nabla \varphi|}(x_0, t_0)$.

Indeed the above calculation shows that if $u$ is a viscosity supersolution of (PME), then $u_2$ should be a viscosity supersolution of (PME--super): If  a classical free boundary subsolution $\varphi$ of (PME--super) crosses $u_2$ from below, then the corresponding $\tilde \varphi$ is a subsolution of (PME) and crosses $u$, yielding a contradiction (there is no distinction between the interior and boundary cases).   
\end{proof}

We will use the spherically symmetric supersolutions for (PME) from \cite{CV}:

\begin{lemma}\label{sphere_symm}
Consider the function 
\[ H(x, t; A, \omega) = A(|x| + \omega t - B)_+\]
with $R/2 < B < R$.  Then $u$ is a classical free boundary supersolution of (PME) in the domain $\{|x|\leq R\}\times [\omega^{-1}(B-R),0]$ if 
\[ \frac{\omega}{A} > 1 + 2(m-1)(n-1)\frac{R-B}{R}.\]
\end{lemma}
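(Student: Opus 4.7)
The plan is to verify the two defining properties of a classical free boundary supersolution (Definition \ref{free_bound_soln}) for $H$, since for (PME) we have $\Phi \equiv 0$ and thus only need to check $H_t \geq (m-1) H \Delta H + |\nabla H|^2$ in $\{H>0\}$ and $V \geq |\nabla H|$ on $\Gamma(H)$. This reduces to a direct pointwise computation on the radially symmetric profile.

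First, I would describe the geometry: $H > 0$ exactly where $|x| > B - \omega t$, so the positive phase is the exterior of a ball of radius $B-\omega t$ which shrinks as $t$ grows (provided $\omega>0$). On the parameter range $t \in [\omega^{-1}(B-R), 0]$ this radius lies in $[B,R]$, so the positive phase intersected with $\{|x| \leq R\}$ is the annulus $\{B-\omega t \leq |x| \leq R\}$. In this annulus I compute
\[ H_t = A\omega, \qquad \nabla H = A\tfrac{x}{|x|}, \qquad |\nabla H|^2 = A^2, \qquad \Delta H = \tfrac{A(n-1)}{|x|}.\]
Plugging into (PME) turns the required inequality $H_t \geq (m-1)H\Delta H + |\nabla H|^2$ into
\[ \frac{\omega}{A} \geq 1 + (m-1)(n-1)\frac{|x|-B+\omega t}{|x|}.\]

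The main (and essentially only) analytic step is to bound the right-hand side uniformly over the relevant spacetime region. Writing $(|x|-B+\omega t)/|x| = 1 - (B-\omega t)/|x|$, the factor $B-\omega t$ is positive, so this quantity is increasing in $|x|$; hence it is maximized at $|x|=R$, giving $(R-B+\omega t)/R$, which is in turn maximized over $t \leq 0$ at $t=0$ and equals $(R-B)/R$. Consequently the interior inequality holds as soon as $\omega/A \geq 1 + (m-1)(n-1)(R-B)/R$, and in particular under the strictly stronger hypothesis $\omega/A > 1 + 2(m-1)(n-1)(R-B)/R$ stated in the lemma.

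Finally, for the free boundary condition I would note that the outward unit normal to $\Omega(H)$ on $\Gamma(H) = \{|x|=B-\omega t\}$ points toward the origin (since the positive phase is the exterior), while the free boundary moves inward at speed $\omega$, so the outward normal velocity is $V = \omega$. Since $|\nabla H| = A$ on the free boundary and $\omega/A > 1$ under the hypothesis, we get $V = \omega > A = |\nabla H|$, verifying the supersolution condition. I do not anticipate any genuine obstacle here: the only place one could lose a constant is in the maximization of $(|x|-B+\omega t)/|x|$, and the factor of $2$ in the stated hypothesis gives comfortable slack over what a sharp bound would require.
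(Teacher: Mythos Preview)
Your computation is correct: the paper does not prove this lemma but simply states it, citing \cite{CV}, and your direct verification of the interior inequality and the free boundary condition is precisely the standard argument. Your observation that the sharp sufficient condition is $\omega/A \geq 1 + (m-1)(n-1)(R-B)/R$ and that the factor~$2$ in the stated hypothesis is just slack is also accurate.
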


Proposition~\ref{prop:perturb} and Lemma~\ref{sphere_symm} yield the following:

\begin{cor}\label{parabolic_super_barrier}
Let us fix $x_0\in\R^n$ and let $H$ be given as in Lemma \ref{sphere_symm}.
Then the inf convolution of $H$, given as 
$$
\underline H(x, t;\alpha) = e^{\alpha t}\inf_{y\in B_{\alpha - \alpha t}(x)} H(y, t) 
$$
is a classical (free boundary) supersolution of (PME--super). Consequently,  there exists $C=C_0$ which only depends on the $C^2$-norm of $\Phi$ in $B_1(x_0)$ such that 
$$
\tilde{H}(x, t) := \alpha \underline H(\alpha^{-1}(x-x_0+\vec{b}(t-t_0)),\alpha^{-1}(t-t_0);C\alpha)
$$ is a classical (free boundary) supersolution of (PME--D) in  $Q_\alpha:= B_\alpha(x_0) \times [t_0- \alpha, t_0]$.
\end{cor}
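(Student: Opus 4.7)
The strategy is to invert the hyperbolic-scaling reduction from (PME--D) to a perturbed (PME) set up in the preamble: I would first build a classical free-boundary supersolution of (PME--super) on the unit scale by combining Lemma \ref{sphere_symm} with Proposition \ref{prop:perturb}, and then rescale and translate to produce the desired classical free-boundary supersolution of (PME--D) on $Q_\alpha$.

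More concretely, I would first note that by Lemma \ref{sphere_symm}, the function $H(y, s; A, \omega) = A(|y| + \omega s - B)_+$ is a classical free-boundary supersolution of (PME) on a ball large enough to contain the relevant inf-convolution. Applying the supersolution half of Proposition \ref{prop:perturb} with parameter $C\alpha$ in place of $\alpha$ — for a constant $C$ depending only on $\|\Phi\|_{C^2(B_1(x_0))}$ to be fixed below — then yields that $\underline H(\cdot,\cdot;C\alpha)$ is a classical free-boundary supersolution of (PME--super) on $B_1(0)\times[-1,0]$, i.e.,
\begin{equation*}
\underline H_s \geq (m-1)\underline H\,\Delta_y \underline H + |\nabla_y \underline H|^2 + C\alpha\,|\nabla_y \underline H| + C\alpha\,\underline H.
\end{equation*}

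Next I would apply the hyperbolic scaling and translation $y = \alpha^{-1}(x - x_0 + \vec b(t - t_0))$, $s = \alpha^{-1}(t - t_0)$, $\tilde H(x, t) = \alpha\,\underline H(y, s; C\alpha)$. A direct chain-rule computation gives $\nabla_x \tilde H = \nabla_y \underline H$, $\Delta_x \tilde H = \alpha^{-1}\Delta_y \underline H$, and $\tilde H_t = \underline H_s + \vec b \cdot \nabla_x \tilde H$, which transforms the previous inequality into
\begin{equation*}
\tilde H_t \geq (m-1)\tilde H\,\Delta_x \tilde H + |\nabla_x \tilde H|^2 + \vec b \cdot \nabla_x \tilde H + C\alpha\,|\nabla_x \tilde H| + C\,\tilde H.
\end{equation*}
Subtracting the right-hand side of (PME--D), the supersolution property reduces to the pointwise inequality
\begin{equation*}
(\vec b - \nabla \Phi(x)) \cdot \nabla_x \tilde H + C\alpha\,|\nabla_x \tilde H| + \bigl[C - (m-1)\Delta \Phi(x)\bigr]\tilde H \geq 0 \quad \text{on } Q_\alpha.
\end{equation*}
Since $\vec b = \nabla \Phi(x_0)$ and $|x - x_0| \leq \alpha$, Taylor's theorem gives $|\vec b - \nabla \Phi(x)| \leq \alpha\,\|\Phi\|_{C^2(B_1(x_0))}$; choosing $C_0 := \max\{\|\Phi\|_{C^2(B_1(x_0))},\,(m-1)\|\Delta \Phi\|_{L^\infty(B_1(x_0))}\}$ makes both bracketed expressions nonnegative, and this $C_0$ depends only on the $C^2$-norm of $\Phi$ on $B_1(x_0)$ as claimed.

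The main point requiring care — rather than routine bookkeeping — is verifying that $\tilde H$ remains \emph{classical} across the free boundary after inf-convolution and rescaling. Because $H$ is explicit and radial, its inf-convolution is itself a radial conical profile, so $|\nabla_y \underline H|$ is bounded below by a positive constant on $\Gamma(\underline H)$; a linear change of variables then keeps $|\nabla_x \tilde H|$ uniformly nondegenerate on $\Gamma(\tilde H)\cap Q_\alpha$. The free-boundary velocity inequality for (PME--D) is obtained by specializing the transformed differential inequality above to points where $\tilde H = 0$, where the $(m-1)\tilde H\,\Delta_x \tilde H$ and $(m-1)\tilde H\,\Delta \Phi$ terms drop out and the inequality $\tilde H_t \geq |\nabla_x \tilde H|^2 + \nabla \Phi \cdot \nabla_x \tilde H$ is exactly the required normal-velocity bound.
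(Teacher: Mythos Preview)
Your proposal is correct and follows essentially the same route as the paper: combine Lemma~\ref{sphere_symm} with Proposition~\ref{prop:perturb} to obtain a supersolution of (PME--super) at the unit scale, then undo the hyperbolic scaling and translation to land on a supersolution of (PME--D) in $Q_\alpha$, and finally check classical regularity via the explicit radial form of the inf-convolution. The paper compresses your chain-rule and Taylor-expansion step into a one-line appeal to the error estimate~\eqref{error_est} and equation~\eqref{final_w}, while you spell out the pointwise inequality and the explicit choice of $C_0$; both arguments are the same in substance. One small wording issue: Proposition~\ref{prop:perturb} as stated only produces a \emph{viscosity} supersolution of (PME--super), not a classical one, so your sentence ``then yields that $\underline H$ is a classical free-boundary supersolution'' is premature---but since you subsequently verify the classical regularity from the explicit radial form (exactly as the paper does), this is a harmless slip rather than a gap.
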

 \begin{proof}
 By Lemma \ref{classical_visc} and Proposition \ref{prop:perturb}, $\underline H$ is a viscosity supersolution of (PME--super) and hence adjusting $\alpha$ by multiplying by a suitable constant to take into account the $C^2$--norm of $\Phi$, we obtain a viscosity supersolution of \eqref{final_w} from which we obtain a supersolution of (PME--D) in $Q_\alpha$.  
 
 To finish it is sufficient to show that $\tilde H$ has the required regularity to be a classical free boundary supersolution.  For this, note that for simplicity we have only taken the supremum over space and the reader can readily check that in this case the infimum for $\underline H(x,t;\alpha)$ is achieved at the point $y$ which minimizes $|y|$ subject to the constraint that $|y - x| = \alpha - \alpha t$, and thus an explicit expression is possible for $\underline H$. 
 \end{proof}


By comparison with these supersolutions, we immediately obtain 
\begin{cor}\label{finite_propagation}
Any viscosity solution has finite speed of propagation and is bounded in a big ball in any local time interval.  Further, if $\Phi$ is convex, then via comparison with the stationary solutions of the form given in Theorem \ref{equilibrium_solution}, the above holds globally in time.
\end{cor}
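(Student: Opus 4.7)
My overall plan is to prove both halves of the corollary by a direct comparison argument, using the classical free boundary supersolutions $\tilde H$ from Corollary~\ref{parabolic_super_barrier} for the local-in-time statement and the stationary profile $\Psi(x) = (C - \Phi(x))_+$ furnished by Theorem~\ref{equilibrium_solution} for the global-in-time statement. In both cases the engine is a one-sided maximum principle: if a classical free boundary supersolution $\tilde H$ dominates $u$ on the parabolic boundary of a cylinder, it dominates $u$ in the interior, so the geometric structure of $\tilde H$ (a shrinking hole for the wedges, the compact sublevel set $\{\Phi \le C\}$ for the stationary solution) directly controls $\{u > 0\}$ and $\|u\|_\infty$.

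For the first assertion I would fix $x_0$ at distance greater than $\alpha$ from $\mathrm{supp}(u_0)$ and a time $t_0 \in (0, \alpha]$, then apply Corollary~\ref{parabolic_super_barrier} to produce $\tilde H$ on $Q_\alpha(x_0, t_0)$ whose positive phase is the complement of a ball that shrinks to $\{x_0\}$ at time $t_0$. Choosing the slope $A$ large enough (within the constraint from Lemma~\ref{sphere_symm}) guarantees $\tilde H \ge u$ on the parabolic boundary of $Q_\alpha$: at the initial slice the hole contains the part of $\mathrm{supp}(u_0)$ inside $B_\alpha(x_0)$, which is empty by the choice of $x_0$; on the lateral boundary $\tilde H$ grows linearly in $A$ off the hole, so it dominates the (continuous, hence locally bounded) function $u$. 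Comparison then forces $u(x_0, t_0) \le \tilde H(x_0, t_0) = 0$. Iterating in steps of length $\alpha$ extends this to any finite time, with the support radius growing at most linearly, and the same family of barriers supplies the local $L^\infty$ bound.

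For the global statement, Theorem~\ref{equilibrium_solution} gives $\Psi = (C-\Phi)_+$ as a stationary classical free boundary solution of (PME--D) with compact support $\{\Phi \le C\}$ once $C$ is large. Choosing $C$ even larger so that $\Psi \ge u_0$ on $\R^n$ and applying the same one-sided maximum principle yields $u(\cdot, t) \le \Psi$ for all $t \ge 0$, giving both the uniform $L^\infty$ bound and a time-independent compact containment of $\{u > 0\}$. The real obstacle in both halves is the comparison principle between the viscosity subsolution $u$ and the classical free boundary supersolution: a first crossing in the interior of the supersolution's positive phase is killed by the viscosity subsolution property applied with the smooth supersolution as test function from above, after the usual strict-inequality perturbation, while a crossing on the free boundary of the supersolution is excluded by the Hopf-type / normal-velocity argument already used in the proof of Lemma~\ref{classical_visc}. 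Once this comparison step is granted, the rest of the argument is just a matter of choosing parameters.
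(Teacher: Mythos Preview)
Your overall plan --- compare $u$ with the barriers $\tilde H$ of Corollary~\ref{parabolic_super_barrier} locally in time, and with $(C-\Phi)_+$ globally --- is exactly what the paper intends when it says ``by comparison with these supersolutions.'' The parameter choices are fine. The gap is in your justification of the comparison itself at the free boundary of the supersolution.

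You invoke ``the Hopf-type / normal-velocity argument already used in the proof of Lemma~\ref{classical_visc}.'' But in that lemma both functions are \emph{classical} free boundary solutions: each has a $C^2$ free boundary with nonvanishing gradient, so Hopf's lemma can compare $|\nabla \varphi|$ and $|\nabla w|$ at the contact point. Here $u$ is only a viscosity subsolution; it has no classical gradient on its free boundary, its free boundary need not be smooth, and there is nothing to which Hopf applies. Nor can you use $\tilde H$ directly as a $C^{2,1}$ test function at a point of $\Gamma(\tilde H)$: the positive part makes $\tilde H$ non-smooth across its boundary, and its natural linear extension is negative outside the support and thus does not touch $u\ge 0$ from above. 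The paper handles this (in Lemma~\ref{no_cross_from_above}, just after the present corollary) by exploiting the particular structure of the barriers: the underlying spherical waves $H$ of Lemma~\ref{sphere_symm} are, by \cite[Lemma~4.4]{BV}, monotone limits of strictly positive classical supersolutions $\psi_\varepsilon$; any first contact of $u$ with $\psi_\varepsilon$ then lies in the positivity set of both, where the Strong Maximum Principle rules it out, and one passes to the limit. For the stationary profile $(C-\Phi)_+$ the corresponding comparison is only established later (Lemma~\ref{comp_above}), after the full Comparison Principle and the identification with weak solutions; the paper is being deliberately informal about that half of the corollary at this stage.
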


For subsolutions we will make use of the Barenblatt profiles (see e.g., \cite{CV} and \cite{V}).

\begin{lemma}\label{barenblatt}[Barenblatt]
Let $B(x, t; \tau, C)$ be the family of functions 
\[ B(x, t; \tau, C) = \frac{(C(t+\tau)^{2\lambda} - K|x|^2)_+}{(t+\tau)}\]
with constants $\lambda, K, C, \tau > 0$ such that 
\[ \lambda = ((m-1)d + 2)^{-1}, ~~~2K = \lambda.\]
Then $B(x, t; \tau, C)$ is a classical (free boundary) solution of (PME).
\end{lemma}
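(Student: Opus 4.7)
The proof is a direct verification: one plugs $B$ into the (PME) inside its positive phase, checks the resulting algebraic identities match the two constraints on $\lambda$ and $K$, and then checks the free boundary condition on the spherical interface. I would organize it as follows.

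First, I would rewrite $B$ in the convenient form $B = C(t+\tau)^{2\lambda - 1} - K|x|^2 (t+\tau)^{-1}$ valid on the positive phase $\Omega(B) = \{|x| < \sqrt{C/K}\,(t+\tau)^\lambda\}$, and compute
\[
B_t = C(2\lambda - 1)(t+\tau)^{2\lambda - 2} + K|x|^2 (t+\tau)^{-2}, \quad \nabla B = -2K x (t+\tau)^{-1}, \quad \Delta B = -2Kd (t+\tau)^{-1},
\]
where $d$ is the spatial dimension. Then $|\nabla B|^2 = 4K^2 |x|^2 (t+\tau)^{-2}$ and $(m-1) B \Delta B = -2K(m-1)d \,[\,C (t+\tau)^{2\lambda - 2} - K|x|^2 (t+\tau)^{-2}\,]$.

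Second, I would match coefficients of $(t+\tau)^{2\lambda - 2}$ and $|x|^2 (t+\tau)^{-2}$ in the identity $B_t = (m-1) B \Delta B + |\nabla B|^2$. The first match gives $2\lambda - 1 = -2K(m-1)d$; the second gives $K = 2K^2((m-1)d + 2)$, equivalently $1 = 2K((m-1)d + 2)$. Combined with the relation $2K = \lambda$, both reduce precisely to $\lambda = ((m-1)d + 2)^{-1}$, confirming the (PME) holds classically in $\Omega(B)$.

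Third, I would verify the free boundary condition. The free boundary $\Gamma_t(B)$ is the sphere $|x| = r(t)$ with $r(t) = \sqrt{C/K}\,(t+\tau)^\lambda$, so the outward normal velocity is $V = \dot r(t) = \lambda \sqrt{C/K}\,(t+\tau)^{\lambda - 1}$. On the other hand, from the expression for $\nabla B$, one finds $|\nabla B| = 2K r(t)(t+\tau)^{-1} = 2\sqrt{KC}\,(t+\tau)^{\lambda - 1}$. Using $2K = \lambda$ one sees $V = |\nabla B|$, which is exactly the free boundary velocity law for (PME) (the $\Phi \equiv 0$ case of the condition in Definition \ref{free_bound_soln}). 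Also $|\nabla B| > 0$ on $\Gamma$, and $B \in C^{2,1}(\overline{\Omega(B)})$, so $B$ is a classical free boundary solution.

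No step presents a real obstacle; the computation is purely algebraic and the only thing to be careful about is the bookkeeping when matching the two independent monomials in $t$ and $|x|^2$, which is why the lemma imposes the two simultaneous relations $\lambda = ((m-1)d+2)^{-1}$ and $2K = \lambda$.
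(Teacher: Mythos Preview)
Your verification is correct. The paper does not supply its own proof of this lemma; it simply quotes the Barenblatt profile as a known object, referring to \cite{CV} and \cite{V}, and moves on. Your direct computation---matching the two monomials in $(t+\tau)$ and $|x|^2$ inside the positive phase and then checking $V=|\nabla B|$ on the spherical interface---is exactly the standard way this is verified in the references, so there is nothing to compare.
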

Using Proposition~\ref{prop:perturb} (see also the proof of Corollary \ref{parabolic_super_barrier}) once again, we obtain the following:
\begin{cor}\label{parabolic_sub_barrier}
Let us fix some $(x_0, t_0)$ and let $B$ be a Barenblatt function.  
Then there exists a constant $C$ which only depends on the $C^2$-norm of $\Phi$ in $B_1(x_0)$ such that 
 $$
 \tilde B(x, t) = \alpha e^{-C\alpha (t-t_0)}\sup_{y\in B_{C\alpha-C(t-t_0)}(x)}B(\alpha^{-1}(y-x_0+\alpha \vec{b}(t-t_0)), \alpha^{-1}(t-t_0))
 $$ is a classical (free boundary) subsolution of (PME--D) in $Q_\alpha:= B_\alpha(x_0) \times [t_0- \alpha, t_0]$.
\end{cor}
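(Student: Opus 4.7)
The plan is to mirror the proof of Corollary \ref{parabolic_super_barrier}, but invoking the \emph{subsolution} half of Proposition \ref{prop:perturb} applied to the Barenblatt profile $B$ of Lemma \ref{barenblatt}, which is a classical free boundary solution of (PME) and hence in particular a viscosity subsolution. Accordingly, I would first use Proposition \ref{prop:perturb} to conclude that the sup-convolution
$$B_\ast(x,t) := e^{-\alpha t}\sup_{y \in B_{\alpha - \alpha t}(x)} B(y,t)$$
is a viscosity subsolution of (PME--sub) in the unit cylinder $B_1(0)\times[-1,1]$.

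Next I would observe that the correction terms $-\alpha|\nabla u_1|-\alpha u_1$ on the right of (PME--sub) furnish precisely the \emph{lower} bound needed to absorb the error term $R(x_0,\cdot)$ in the localized model equation \eqref{final_w}, by the estimate \eqref{error_est}. Thus, replacing $\alpha$ throughout by $C\alpha$ for a constant $C$ proportional to $\|\Phi\|_{C^2(B_1(x_0))}$, the function $B_\ast$ becomes a viscosity subsolution of \eqref{final_w}. Inverting the hyperbolic rescaling and the $\vec b$-translation that produced \eqref{final_w} from (PME--D) then turns $B_\ast$ into a viscosity subsolution of (PME--D) on $Q_\alpha$; a direct change of variables identifies this rescaled object with the $\tilde B$ of the statement.

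It remains to upgrade $\tilde B$ from a viscosity subsolution to a classical free boundary subsolution. Here I would exploit the explicit structure of the Barenblatt profile: since $B(\cdot,s)$ is smooth and strictly radially decreasing on its positive set with $|\nabla B|$ nonvanishing along its free boundary, the supremum defining $B_\ast(x,t)$ is attained \emph{uniquely} on the sphere $\partial B_{\alpha-\alpha t}(x)$, namely at the point closest to the Barenblatt center. This yields an explicit closed form for $B_\ast$, and hence for $\tilde B$, from which $C^{2,1}$-regularity on $\overline{\{\tilde B>0\}}\cap Q_\alpha$ and $|\nabla\tilde B|>0$ along $\Gamma(\tilde B)\cap Q_\alpha$ follow by inspection, exactly as in Corollary \ref{parabolic_super_barrier}.

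The main obstacle I anticipate is a mild geometric compatibility issue: the Barenblatt parameters $\tau,C$ (together with $\alpha$) must be chosen so that the sup-convolution balls $B_{C\alpha-C(t-t_0)}(x)$ remain uniformly separated from the Barenblatt center on $Q_\alpha$ (so the maximizer is attained on the sphere rather than at the interior center), and so that the free boundary of $\tilde B$ actually lies inside $B_\alpha(x_0)$ for all $t\in[t_0-\alpha,t_0]$. Once these quantitative compatibility conditions are arranged, the remainder of the argument is a routine calculation paralleling the supersolution case.
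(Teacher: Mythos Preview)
Your proposal is correct and follows essentially the same route as the paper: the paper's own proof consists of the single sentence ``Using Proposition~\ref{prop:perturb} (see also the proof of Corollary \ref{parabolic_super_barrier}) once again,'' and your write-up is precisely a faithful unpacking of that instruction---apply the subsolution half of Proposition~\ref{prop:perturb} to the Barenblatt profile, absorb the error $R$ via \eqref{error_est} by scaling $\alpha\mapsto C\alpha$, undo the hyperbolic rescaling and drift translation, and then read off classical regularity from the explicit radial structure. The geometric compatibility caveat you raise is a reasonable bookkeeping point but does not depart from the paper's argument.
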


\begin{remark}\label{perturbed_velocity}
The reason for taking the hyperbolic scaling is because we will have occasion to require rather fine control on the boundary velocity (see Lemma \ref{nontangential}) and this is the scaling which preserves the velocity -- in contrast to the parabolic scaling, which dramatically reduces the effect of the drift $\Phi$ in the bulk (the positivity set), but unfortunately at the cost of severely disrupting the boundary velocity.
\end{remark}

To establish the Comparison Principle, we will need the following weak analogue of (ii) in the definition of viscosity supersolutions for subsolutions, the proof of which utilizes an approximation lemma from \cite{BV}.

\begin{lemma}\label{no_cross_from_above}
Let $u$ be a viscosity subsolution of $(PME$--$D)$, and let $\varphi$ be a classical free boundary supersolution from Lemma \ref{parabolic_super_barrier} which lies above $u$ at some time $t_0$.  Then $\varphi$ cannot cross $u$ from above at a later time $t > t_0$.
\end{lemma}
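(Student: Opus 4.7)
I would argue by contradiction, following the standard template of such ``no crossing'' arguments for degenerate free boundary problems. Suppose that $\varphi$ does cross $u$ from above at some first crossing time $t_1 > t_0$, so that $u \leq \varphi$ on the parabolic strip $[t_0, t_1]$ with equality at some point $x_1$. My first move would be to use the explicit form of $\varphi = \tilde H$ from Corollary~\ref{parabolic_super_barrier} to perform a small perturbation of the parameters $A, B, \omega, \alpha$ (and hence of the sup-convolution radius) so as to make $\varphi$ a \emph{strict} classical supersolution both in the interior and at its free boundary, while keeping the crossing in place by continuity. The explicit spherical nature of $\varphi$ ensures that its free boundary is smooth and that $|\nabla \varphi| > 0$ on $\Gamma(\varphi)$, which is crucial for what follows.

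Next I would split into two cases by the position of the contact point. If $\varphi(x_1, t_1) > 0$, then $\varphi$ is $C^{2,1}$ in a parabolic neighborhood of $(x_1, t_1)$ and touches $u$ from above there, so Definition~\ref{viscosity} (the subsolution clause) forces
\[
\varphi_t \leq (m-1)\varphi \Delta \varphi + |\nabla \varphi|^2 + \nabla \varphi \cdot \nabla \Phi + (m-1)\varphi \Delta \Phi
\]
at $(x_1, t_1)$, which directly contradicts the strict reverse inequality given by the perturbed classical supersolution property of $\varphi$. So the interior case is straightforward.

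The main obstacle, as expected, is the case where the contact occurs on $\Gamma(\varphi)$, i.e., $\varphi(x_1, t_1) = u(x_1, t_1) = 0$. Here $\varphi$ is only Lipschitz across its free boundary, so one cannot plug it directly into the viscosity subsolution condition. This is precisely where the approximation lemma from \cite{BV} is invoked: using the smoothness of $\Gamma(\varphi)$ and $|\nabla \varphi| > 0$, one approximates $\varphi$ near the contact by a family of genuinely $C^{2,1}$ functions that still lie above $u$ on the relevant parabolic region and are (strict) supersolutions of a small perturbation of (PME--D). Concretely, I would shift the free boundary of $\varphi$ slightly inward along the outward normal $\vec\nu$ at $(x_1, t_1)$ by setting $\varphi^\sigma(x,t) = \varphi(x + \sigma \vec \nu, t)$ for small $\sigma > 0$; by the inverse function theorem and $|\nabla \varphi| > 0$, $\varphi^\sigma$ is $C^{2,1}$ on an open set containing a small parabolic neighborhood of $(x_1, t_1)$, and by the explicit form of $\varphi$ one checks that $\varphi^\sigma$ remains a strict supersolution of (PME--D) up to errors that vanish with $\sigma$.

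Finally, since $\varphi^\sigma > \varphi \geq u$ initially and $\varphi^\sigma \to \varphi$ uniformly, continuity of $u$ produces a first touching of $\varphi^\sigma$ with $u$ at some point $(x_1^\sigma, t_1^\sigma) \to (x_1, t_1)$ as $\sigma \to 0$, and by the geometry of the inward shift this contact occurs in $\{\varphi^\sigma > 0\}$. This reduces Case~2 to Case~1, applied to $\varphi^\sigma$, yielding the desired contradiction after sending $\sigma \to 0$. The heart of the argument is thus the perturbation lemma that converts a free-boundary touching into an interior touching without destroying the (strict) classical supersolution property -- this is the step that requires the full strength of the explicit structure of the barriers from Corollary~\ref{parabolic_super_barrier} and the [BV] approximation input.
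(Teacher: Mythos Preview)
Your outline for the interior case (Case~1) is fine, but the free-boundary case (Case~2) has a genuine gap, and your use of the \cite{BV} input differs from what the paper actually does.

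The paper does not split into cases at all. Instead it exploits the specific \emph{inf-convolution} structure of $\varphi=\tilde H$: at the contact point $(x_0,t_0)$ the infimum defining $\underline H$ is realized at some $(x_1,t_1)$, so a \emph{translate} $\tilde\psi$ of the underlying spherical wave $\psi=H$ from Lemma~\ref{sphere_symm} also touches $u$ from above at $(x_0,t_0)$ (and $\tilde\psi\geq\varphi$). Then Lemma~4.4 of \cite{BV} is applied to $\psi$, not to $\varphi$: it yields a monotone family $\psi_\varepsilon\searrow\tilde\psi$ of classical, \emph{everywhere strictly positive} supersolutions. Since $\psi_\varepsilon>0$ has no free boundary, any contact of $u$ with $\psi_\varepsilon$ is automatically interior, and the Strong Maximum Principle (for the viscosity subsolution $u$ versus the classical positive supersolution $\psi_\varepsilon$) forbids it. Passing to the limit gives $u\leq\tilde\psi$, contradicting the crossing. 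The point is that the free boundary is \emph{removed}, not relocated.

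Your proposed reduction, by contrast, translates $\varphi$ in space to push the original contact point into $\{\varphi^\sigma>0\}$. Two things go wrong. First, (PME--D) is spatially inhomogeneous through $\Phi$, so $\varphi^\sigma(x,t)=\varphi(x+\sigma\nu,t)$ is not in general a supersolution of (PME--D); saying it is one ``up to errors that vanish with $\sigma$'' does not combine cleanly with the viscosity inequality you need at the contact point. Second, and more fundamentally, even granting that, the \emph{new} first-contact point $(x_1^\sigma,t_1^\sigma)$ of $u$ with $\varphi^\sigma$ need not lie in $\{\varphi^\sigma>0\}$. The shifted free boundary of $\varphi^\sigma$ is only $O(\sigma)$ away from the original one, and $(x_1^\sigma,t_1^\sigma)$ is also only $O(\sigma)$ away from $(x_1,t_1)$; nothing in your argument rules out that the new contact again occurs on $\Gamma(\varphi^\sigma)$. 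Your claimed reduction of Case~2 to Case~1 therefore does not close. The paper's move---unwinding the inf-convolution and using the \cite{BV} approximation by strictly positive supersolutions---is precisely what sidesteps this difficulty.
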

\begin{proof}
Let $\varphi$ and $u$ be as described in the statement, and suppose that $\varphi$ touches $u$ from above at some point $(x_0, t_0)$.  From Lemma \ref{parabolic_super_barrier}, we have that $\varphi$ is given as the inf convolution of some spherical traveling waves from Lemma \ref{sphere_symm}, which we denote by $\psi$.  Further, let us suppose the infimum is achieve at $(x_1, t_1)$ so that 1) $\varphi(x_0, t_0) = \psi(x_1, t_1)$ and 2) by the definition of $\varphi$ as an inf convolution, the translated function 
$$
\tilde \psi(x, t) = \psi(x + (x_1 - x_0), t + (t_1 - t_0))
$$ also touches $u$ from above at the point $(x_0, t_0)$.  From Lemma 4.4 in \cite{BV}, we know that $\psi$ can be given as the monotone limit of classical positive supersolutions, and hence the same is true of $\tilde \psi$: I.e., there exsits $\psi_\varepsilon \searrow \tilde \psi$, with $\psi_\varepsilon > 0$ classical.  But since $u$ cannot touch $\psi_\varepsilon$ by the Strong Maximum Principle, we obtain in the limit that $u \leq \tilde \psi$, which is a contradiction.
\end{proof}

\subsection{Comparison Principle and Identification with Weak Solution}

Here the outline of the proof closely follows that of the corresponding result for (PME) (Theorem 2.1 in \cite{CV}): We will give an abridged version of the proof, pointing out main steps and modifications for our problem.  Here we let $Q = \mathbb R^n \times \mathbb R^+$.

\begin{thm}\label{thm:cp} [Comparison Principle]
If $u$ is a viscosity subsolution and $v$ is a viscosity supersolution in the sense of Definition \ref{viscosity} with strictly separated initial data, $u_0 \prec v_0$, then $u(x, t) \leq v(x, t)$ for every $(x, t) \in Q$. 
\end{thm}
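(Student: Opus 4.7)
The plan is to follow the Caffarelli--Vazquez blueprint (Theorem 2.1 in \cite{CV}) by contradiction, absorbing the drift into the local barriers built in Section 2.3. Suppose $\sup_{Q}(u-v)>0$ and set
\[
t_0 := \inf\bigl\{\,t>0 \ :\ \sup_{x}(u(x,t)-v(x,t))>0\,\bigr\}.
\]
The strict separation $u_0\prec v_0$ together with continuity gives $t_0>0$, $u\le v$ on $\mathbb{R}^n\times[0,t_0]$, and a first--contact point $x_0$ with $u(x_0,t_0)=v(x_0,t_0)=:h\ge 0$. As in \cite{CV}, before analyzing the contact I would first replace $u$ and $v$ by a sup--convolution $u^\delta$ and an inf--convolution $v_\delta$ over a small parabolic ball shrinking in $t$; by Proposition \ref{prop:perturb} these remain sub/supersolutions of (PME--D) up to a perturbation of order $O(\alpha\|\Phi\|_{C^2})$ as in \eqref{error_est}, which will be sent to zero at the end.

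\textbf{Case 1 (interior contact, $h>0$).} In a small parabolic neighborhood of $(x_0,t_0)$, both $u$ and $v$ are bounded below by a positive constant, so (PME--D) is uniformly parabolic there. Using $v$ (or an inf--convolution of $v$) as a $C^{2,1}$ test function touching $u$ from above in \eqref{visc_defn_sub}, and symmetrically $u$ as a test function touching $v$ from below in \eqref{visc_defn_sup}, a standard doubling--of--variables computation shows that $w:=u-v$ satisfies a linear parabolic inequality in the viscosity sense. The strong maximum principle then forces the non--positive function $w$ to attain an interior zero only at $t=0$, contradicting the definition of $t_0$.

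\textbf{Case 2 (free boundary contact, $h=0$).} Now $x_0\in \Gamma_{t_0}(u)\cap \Gamma_{t_0}(v)$, and the analysis is driven by the local barriers of Section 2.3. I would slip a Barenblatt--type classical free boundary subsolution $\tilde{B}$ from Corollary \ref{parabolic_sub_barrier} into the gap between $u$ and $v$ just before $t_0$: tune parameters so that $\tilde{B}(\cdot,t_1)\prec v(\cdot,t_1)$ for some $t_1<t_0$ on a ball around $x_0$, while choosing its free boundary speed $|\nabla \tilde B|+\nabla\Phi\cdot\nabla \tilde B/|\nabla \tilde B|$ large enough that $\tilde{B}(x_0,t_0)>0$. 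Clause (ii) of Definition \ref{viscosity} applied to the supersolution $v$ then forces $\tilde{B}\le v$ throughout the cylinder, yielding the contradiction $0<\tilde{B}(x_0,t_0)\le v(x_0,t_0)=0$. Dually, one could push a spherical supersolution $\tilde{H}$ from Corollary \ref{parabolic_super_barrier} down until it sits just above $v$ near $(x_0,t_0)$ but still above $u$ at $t_1$, and then apply Lemma \ref{no_cross_from_above} to the subsolution $u$ to produce the same contradiction from the $u$--side.

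\textbf{Main obstacle.} The free boundary case is the delicate one: the inserted barrier must simultaneously (a) fit strictly between $u$ and $v$ slightly before $t_0$ and (b) have free boundary velocity large enough to overtake $v$ at $(x_0,t_0)$. This is precisely what the hyperbolic rescaling of Section 2.3 was engineered to preserve (cf.\ Remark \ref{perturbed_velocity}): the linear part $\vec b=\nabla\Phi(x_0,t_0)$ of the drift enters as a translation of the barrier, while the higher--order behavior of $\Phi$ is absorbed into the $O(\alpha)$ error of \eqref{error_est}. A further technical point is that the outward normal of $\Gamma_{t_0}(v)$ at $x_0$ need not be radial from the center of the spherical barrier, so a rotation/translation argument (together with a non--tangentiality observation for the contact direction) is required to align the barrier with the local free boundary geometry of $v$ before the velocity comparison can actually be invoked.
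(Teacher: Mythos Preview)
Your outline follows the right blueprint but misidentifies the role of the sup/inf convolutions and is missing the key quantitative step, so Case~2 does not close.

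First, the regularizations $Z=\sup_{\overline B_r}u$ and $W=\inf_{\overline B_{r-\delta t}}v$ are not there to ``absorb a perturbation of order $O(\alpha\|\Phi\|_{C^2})$'' via Proposition~\ref{prop:perturb}; that proposition is a barrier--building device.  In the comparison argument the convolutions serve a purely geometric purpose: they force $\Omega(Z)$ to have an interior ball and $\Omega(W)$ an exterior ball at every free boundary point, so that at a first contact $P_0$ there is a well--defined common tangent hyperplane with a finite normal speed $m$ (Lemma~\ref{speed_well_defined}).  Note also that the asymmetry --- fixed radius $r$ for $Z$, shrinking radius $r-\delta t$ for $W$ --- is deliberate: it manufactures a speed gap $\delta>0$ between $\Gamma(W)$ and $\Gamma(v)$ that is what ultimately produces the contradiction.

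Second, and this is the real gap, your barrier insertion in Case~2 is circular.  You want to choose $\tilde B$ with $\tilde B(\cdot,t_1)\prec v(\cdot,t_1)$ and with free boundary speed large enough that $\tilde B(x_0,t_0)>0$.  But for a classical subsolution the boundary speed is $|\nabla\tilde B|+\nabla\Phi\cdot\nu$, so a large speed forces a large gradient, hence large values of $\tilde B$ near its boundary; the constraint $\tilde B\prec v$ then requires $v$ to be correspondingly large there.  You have no such lower bound on $v$ (or on $u$) near $(x_0,t_0)$ from the mere fact of first contact.  The paper supplies exactly this missing piece: the non--tangential estimate (Lemma~\ref{nontangential}) shows $\liminf Z(x_0+x,t_0)/x_n \ge m-\nabla\Phi(x_0)\cdot e_n$, which is what lets one slide a subsolution of speed arbitrarily close to $m$ under $Z$ (hence under $W$, hence under $v$ after a tiny shift).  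Comparison with $v$ then forces $\Gamma(v)$ to move with speed $\ge m'$ for every $m'<m$, so $\Gamma(W)$ moves with speed $\ge m'+\delta>m$ at $P_0$, contradicting that $Z$ caught $W$ from below there.  Without the interior/exterior ball geometry and the non--tangential estimate, your ``tune parameters'' step has nothing to tune against; the alignment issue you flag in your last paragraph is a symptom of this, and it is resolved not by a rotation argument but by the regularization giving you the tangent plane in the first place.
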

\begin{proof}
1) [Sup and Inf Functions]  For given $\delta > 0$ and $r > 0$ small with $r \gg \delta$ we introduce the regularized functions 
\[ W(x, t) = \inf_{\overline B_{r - \delta t}(x, t)} v(y, \tau)\]
and 
\[ Z(x, t) = \sup_{\overline B_r(x, t)} u(y, \tau)\]
First  note that $W$ and $Z$ preserve properties of $v$ and $u$: 
\begin{itemize}
\item [$\circ$] \emph{$W$ is a supersolution and $Z$ is a subsolution;}
\item [$\circ$] \emph{$Z(\cdot, r) \prec W(\cdot, r)$ for $r$ sufficiently small.}
\end{itemize}  
For a proof of the first item see \cite{BV}, Lemma 7.1 or the proof of Lemma \ref{parabolic_super_barrier}.  The proof of the second item relies on the fact that 
\begin{itemize}
\item [$\circ$] \emph{The support of viscosity subsolutions and supersolutions evolve in a continuous way.  Here continuity is understood as continuity in the Hausdorff distance (in time) of the positivity set.}
\end{itemize}
The proof of this can be done by comparison with the supersolutions (respectively subsolutions) constructed in Corollary \ref{parabolic_super_barrier} (respectively Corollary \ref{parabolic_sub_barrier}).  We omit the details since with replacement of barriers it is no different from the proof of Proposition 6.2 in \cite{BV}.    

Thus if we can prove that $W$ stays above $Z$ for all choices of $r$ and $\delta$ (sufficiently small), then we may take $\delta \rightarrow 0$ and then $r \rightarrow 0$ to recover the conclusion for $u$ and $v$.  First let us note that due to the Strong Maximum Principle, $W$ cannot touch $Z$ from above, and therefore we are reduced to the analysis of a first contact point of $W$ and $Z$ at some $P_0 = (A, t_0)$. 

The key usefulness of $Z$ and $W$ lies in the fact that they enjoy interior/exterior ball properties: 
\begin{itemize}
\item [$\circ$] \emph{The positivity set of $Z$ has the interior ball property with radius $r$ at every point of its boundary \emph{and} at the points of the boundary of the support of $u$ where these balls are centered we have an exterior ball;}
\item [$\circ$] \emph{The positivity set of $W$ has the exterior ball property with radius less than $r - \delta t$ (since in this case we really have an exterior ellipsoid in space--time) and at the points of the boundary of the support of $v$ where these balls are centered we have an interior ball.}
\end{itemize}
For detailed proofs of these statements we again refer the reader to \cite{BV}. 

2) [The Contact Point] The first contact point $P_0 = (x_0, t_0)$ is located at the free boundary of both functions.  Therefore by the definitions of $Z$ and $W$, there is a point $P_1 = (x_1, t_1)$ on the free boundary of $u$ located at distance $r$ from $P_0$ and there is another point $P_2 = (x_2, t_2)$ on the free boundary of $v$ at distance $r_0 = r - \delta t_0$ from $P_0$.  Let us also denote by $H_Z$ (respectively $H_W$) the tangent hyperplane to the free boundary of $Z$ (respectively $W$) at $P_0$.  (see Figure \ref{fig_contact_point})
\begin{figure}
\center{\includegraphics[height=3.5in, width=3.5in]{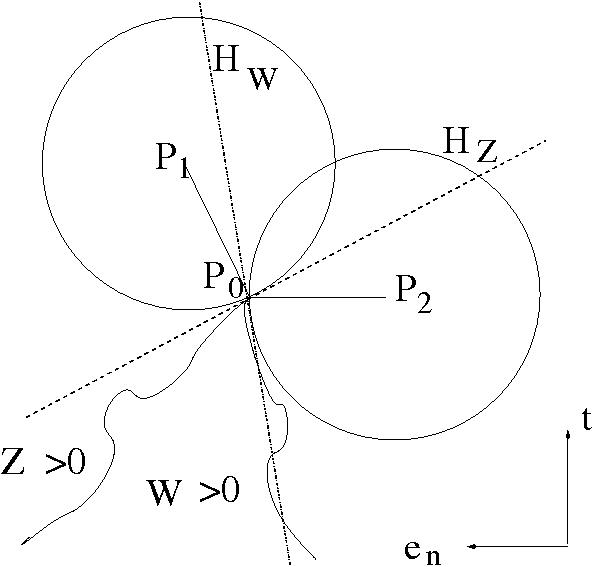}} \caption{The geometry at the contact point}
\label{fig_contact_point}
\end{figure}

\begin{lemma}\label{speed_well_defined}
Neither $H_Z$ nor $H_W$ is horizontal. In particular, one can denote the space-time normal vector to $H_Z$, in the direction of $P_1-P_0$, as $(e_n,m)\in\R^n\times \R$ where $|e_n|=1$ and $-\infty < m<\infty$.
\end{lemma}
\begin{proof}
It is enough to show that $t_1>t_0-r$ (i.e., $\Gamma(Z)$ cannot propagate with infinite speed) and $t_2 <t_0+r $ (i.e., $\Gamma(W)$ cannot propagate with negative infinite speed).  The desired conclusion then follows by the ordering of $Z$ and $W$.

We first show that $t_2<t_0+r$.  Otherwise $H_W$ is horizontal and after translation we have $P_0 = (0, -r)$ and $P_2 = (0, 0)$.  Moreover $\Omega(v)$ has an interior ball at $P_2$ with horizontal tangency with radius $0 < r^\prime < r$.  Now in any parabolic cylinder 
$$
C_{\eta} = \{ (x, t): |x| \leq \eta, -\eta^2 \leq t \leq 0\}
$$
 with bottom edge contained in the interior ball (which can be achieved by taking $\eta \leq r^\prime$), we have by continuity that $v \geq M > 0$ on that edge. (see Figure \ref{fig_horizontal_tangent})
\begin{figure}
\center{\includegraphics[height=3.5in]{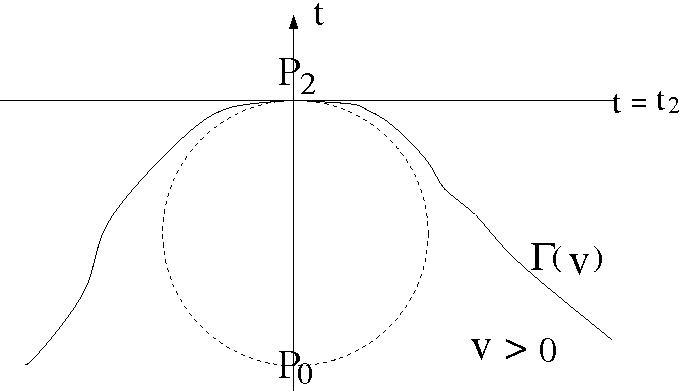}}
\caption{Infinite, negative speed}\label{fig_horizontal_tangent}
\end{figure}
 
On the lateral boundary of $C_\eta$ it may be the case that $v = 0$, so we will have to compare with a subsolution with support strictly contained in $-\eta < |x| < \eta$ at time $t=-\eta^2$ and still contains $0$ in its support at time $t = 0$, which rigorously implies that $v$ cannot contract sufficiently fast for $(0, 0)$ to be a free boundary point.  The necessary subsolution can be constructed as the one in Lemma \ref{parabolic_sub_barrier}, adjusted for the parabolic scaling.

The case $t_1>t_0-r$ follows \emph{mutatis mutantis} from the arguments in \cite{CV} and \cite{BV} (see \cite{CV}, Lemma 4.2 or \cite{BV}, Lemma 8.2) with the barrier in Lemma \ref{parabolic_super_barrier} replacing the barriers used therein.  
\end{proof}

3) [Non--tangential Estimate] 
The next lemma states that the normal velocity $V$ of $\Gamma(Z)$ at $(x_0,t_0)$ satisfies, in the viscosity sense,
$$
V_{(x_0,t_0)} \leq \left(|\nabla Z| +\nabla \Phi(x_0)\cdot\frac{\nabla Z}{|\nabla Z|}\right)(x_0,t_0).
$$

\begin{lemma}\label{nontangential}
Let $x_n:= x\cdot e_n$, and consider a non-tangential cone 
\[K:=\{x: x_n \geq k|x| \mbox{ with $k>0$}\}.\]  Then we have 
$$
\lim_{x\in K,~x \to 0}\dfrac{Z(x_0+x, t_0)}{x_n} \geq m- \nabla \Phi(x_0)\cdot e_n.
$$
\end{lemma}
\begin{proof}
The argument is parallel to the proof of Lemma 4.3 in \cite{CV}; the only difference for us is taking into account the change of reference frame introduced by the drift given by $\Phi$. 
This is ensured by the local nature of the construction of our barrier in Lemma \ref{parabolic_super_barrier}, which replaces the corresponding barriers used in \cite{CV}.
 
\end{proof}

4) [Conclusion] Due to Lemma \ref{nontangential}, we may place a small subsolution $\varphi$ from Lemma \ref{parabolic_sub_barrier} below $Z$ at $P_0$ with speed close to $m$ (again see Remark \ref{perturbed_velocity}, which assures us that our subsolutions are constructed so that this is possible) such that it crosses anything with speed $m^\prime < m$.  Since $\varphi$ is also below $W$ and hence $v$ (after a small translation), $v$ must expand by at least $m^\prime$, but then $\Gamma(W)$ has speed $m^\prime + \delta > m$ at $P_0$, yielding a contradiction to the fact that $Z$ touched $W$ from \emph{below} at $P_0$.
\end{proof}

We can now establish uniqueness of viscosity solutions:

\begin{thm}\label{exists_unique}
The problem (PME--D) admits a unique solution in the class of viscosity solutions as defined in Definition \ref{viscosity} for continuous and nonnegative initial data.  This solution coincides with the continuous weak solution. 
\end{thm}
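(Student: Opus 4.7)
The existence half is essentially free: Theorem \ref{weak_soln_results}(a) gives a unique continuous weak solution $u_w$ with initial data $u_0$, and Corollary \ref{weak} shows $u_w$ is automatically a viscosity solution. The real task is uniqueness of viscosity solutions, which simultaneously identifies them with $u_w$. Fix any viscosity solution $u$ with initial data $u_0$; the plan is to sandwich $u$ between two families of approximations both converging to $u_w$, the sandwich coming from the Comparison Principle (Theorem \ref{thm:cp}) applied after producing strictly separated initial data.

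\emph{Upper bound.} Let $u^\varepsilon$ be the smooth, uniformly positive solution of (PME--D) with initial data $u_0+\varepsilon$ furnished by Lemma \ref{equicontinuity}; it is a viscosity supersolution by Lemma \ref{classical_visc}. Since $u_0$ is continuous and compactly supported, $u_0 \prec u_0+\varepsilon$ is immediate: $\mathrm{supp}(u_0)$ is compact and contained in $\mathrm{Int}(\mathrm{supp}(u_0+\varepsilon))=\mathbb{R}^n$, and $u_0<u_0+\varepsilon$ pointwise on $\mathrm{supp}(u_0)$. Theorem \ref{thm:cp} yields $u\leq u^\varepsilon$, and the uniform convergence $u^\varepsilon\to u_w$ from Lemma \ref{equicontinuity} gives $u\leq u_w$ upon letting $\varepsilon\to 0$.

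\emph{Lower bound.} Set $v_0^\varepsilon := (u_0-\varepsilon)_+$. Continuity of $u_0$ and compactness of its support give $\mathrm{supp}(v_0^\varepsilon)=\overline{\{u_0>\varepsilon\}}\subset \{u_0>0\}=\mathrm{Int}(\mathrm{supp}(u_0))$, and $v_0^\varepsilon<u_0$ on this support, so $v_0^\varepsilon\prec u_0$. Let $v^\varepsilon$ be the weak (hence, by Corollary \ref{weak}, viscosity) solution with initial data $v_0^\varepsilon$. Theorem \ref{thm:cp} yields $v^\varepsilon\leq u$; the $L^1$-contraction and equicontinuity built into the weak theory of \cite{BH} (cf.\ Lemma \ref{equicontinuity}) force $v^\varepsilon\to u_w$ uniformly as $v_0^\varepsilon\to u_0$ uniformly, so $u_w\leq u$. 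Combining the two bounds gives $u=u_w$, which simultaneously establishes uniqueness of viscosity solutions and their coincidence with the continuous weak solution.

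\emph{Main obstacle.} The only genuine subtlety is the passage to the limit in $\varepsilon$: one has to verify the strict separation $\prec$ rigorously for both approximating pairs $(u_0,u_0+\varepsilon)$ and $((u_0-\varepsilon)_+,u_0)$, and to establish uniform convergence of each approximating family to $u_w$. The separation is a direct compactness/continuity check, while the uniform convergence leans on the continuous dependence on initial data implicit in the weak-solution theory of \cite{BH}. Once these ingredients are in hand, Theorem \ref{thm:cp} does all the heavy lifting.
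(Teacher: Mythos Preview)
Your proposal is correct and follows essentially the same route as the paper: existence via Corollary~\ref{weak}, then a two-sided sandwich using Theorem~\ref{thm:cp} with data $u_0+\varepsilon$ above and $(u_0-\varepsilon)_+$ below, passing to the limit by continuous dependence of the weak solution on initial data. The paper organizes the lower bound slightly differently---via a diagonal sequence $u_n^{\varepsilon_n}$ of positive approximations to the weak solutions with data $(u_0-1/n)_+$, so that Lemma~\ref{equicontinuity} applies directly---but the substance is the same as yours.
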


\begin{proof}  
The existence of a continuous weak solution can be provided as the uniform limit of classical solutions with initial data $u_{0, \varepsilon} = u_0 + \varepsilon$, and by Lemma \ref{stability}, such a limit, which we will denote by $U$, is also a continuous viscosity solution.  Further, by comparison with $u_{0, \varepsilon}$ and taking a limit, it is clear that  such a limit $U$ is also a maximal viscosity solution.  

Uniqueness would follow if we can show that any other viscosity solution $u$ also cannot be smaller than $U$.
For this purpose, consider $u_n(x,t)$ with initial data $u_n(x,0):= (u_0-\frac{1}{n})_+$.    Now consider positive $u_n^{\e_n}$ such that $|u_n^{\e_n}-u_n| <\frac{1}{n}$ in $\R^n\times [0,T]$. It follows from Lemma \ref{equicontinuity} that $u_n^{\e_n}$  uniformly converges to $U_2(x,t)$, which is then a continuous weak solution of (PME--D).  Therefore, by uniqueness of weak solutions, $U_2$ is equal to $U$.  On the other hand by Theorem~\ref{thm:cp} $u_n \prec u$ and thus $U=U_2 \leq u$. Hence we conclude.
\end{proof}

Using Theorem \ref{thm:cp} and Theorem \ref{exists_unique}, we can in fact prove a stronger comparison theorem for viscosity solutions (see \cite{BV}, Theorem 10.2):

\begin{thm}\label{cp_strong}
Let $u_1$ and $u_2$ be respectively a viscosity subsolution and a viscosity supersolution of (PME--D) with initial data $u_{0, 1}$ and $u_{0, 2}$ such that $u_{0, 1} \leq u_{0, 2}$.  Then $u_1(x, t) \leq u_2(x, t)$.
\end{thm}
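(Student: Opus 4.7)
The plan is to sandwich $u_1$ and $u_2$ by the unique viscosity (equivalently weak) solution $U_1$ having initial data $u_{0,1}$, i.e.\ to establish
\[ u_1 \;\leq\; U_1 \;\leq\; u_2. \]
Since the hypothesis $u_{0,1}\leq u_{0,2}$ is not strong enough to invoke Theorem~\ref{thm:cp} directly, each of the two inequalities above requires a separate approximation step.

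For $u_1\leq U_1$, I would approximate $u_{0,1}$ from above by $u_{0,1}+\varepsilon$. The classical, strictly positive solutions $w_\varepsilon$ of (PME--D) with this initial data are in particular viscosity supersolutions, and since $u_{0,1}$ is compactly supported while $u_{0,1}+\varepsilon>0$ on all of $\R^n$ one has the strict separation $u_{0,1}\prec u_{0,1}+\varepsilon$ trivially. Theorem~\ref{thm:cp} then yields $u_1\leq w_\varepsilon$, and sending $\varepsilon\to 0$ (invoking Lemma~\ref{equicontinuity}, which gives $w_\varepsilon\to U_1$ uniformly) produces $u_1\leq U_1$.

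For $U_1\leq u_2$, I would approximate $u_{0,1}$ from below by $u_{0,1,n}:=(u_{0,1}-1/n)_+$, mirroring the device already used in the proof of Theorem~\ref{exists_unique}. The strict separation $u_{0,1,n}\prec u_{0,2}$ follows by routine continuity: the support of $u_{0,1,n}$ is compactly contained in $\{u_{0,1}\geq 1/n\}\subset\{u_{0,1}>0\}\subset\{u_{0,2}>0\}\subset\mathrm{Int}(\mathrm{supp}\,u_{0,2})$, and on that support $u_{0,1,n}=u_{0,1}-1/n<u_{0,1}\leq u_{0,2}$. Letting $U_{1,n}$ be the viscosity solution with initial data $u_{0,1,n}$ (furnished by Theorem~\ref{exists_unique}), Theorem~\ref{thm:cp} gives $U_{1,n}\leq u_2$.

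It remains to pass to the limit $U_{1,n}\to U_1$, which I view as the main (though not severe) technical point, since the same argument is already present in the proof of Theorem~\ref{exists_unique}: choose strictly positive smooth $u_n^{\varepsilon_n}$ solving (PME--D) with initial data $u_{0,1,n}+\varepsilon_n$ such that $|u_n^{\varepsilon_n}-U_{1,n}|<1/n$, and pick $\varepsilon_n\to 0$ so that $u_{0,1,n}+\varepsilon_n\to u_{0,1}$ uniformly. Lemma~\ref{equicontinuity} together with uniqueness of weak solutions identifies the uniform limit of $u_n^{\varepsilon_n}$ as $U_1$, whence $U_{1,n}\to U_1$ uniformly. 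Passing to the limit in $U_{1,n}\leq u_2$ then closes the sandwich and yields $u_1\leq u_2$.
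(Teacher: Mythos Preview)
Your proposal is correct and follows exactly the approach the paper indicates: the paper does not spell out a proof but says the result follows from Theorem~\ref{thm:cp} and Theorem~\ref{exists_unique} (referring to \cite{BV}, Theorem~10.2 for details), and your sandwich $u_1\leq U_1\leq u_2$ via the approximations $u_{0,1}+\varepsilon$ and $(u_{0,1}-1/n)_+$ is precisely the content of that combination. The first inequality is the ``maximal viscosity solution'' observation already in the proof of Theorem~\ref{exists_unique}, and the second inequality reproduces the $u_n^{\varepsilon_n}$ limit argument from that same proof with $u_2$ in place of $u$.
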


We can now strength Lemma \ref{no_cross_from_above}:

\begin{lemma}\label{comp_above}
Let $u$ be a viscosity subsolution of (PME--D), and let $\varphi$ be a classical free boundary supersolution which lies above $u$ at some time $t_0$.  Then $\varphi$ cannot cross $u$ from above at a later time $t > t_0$.
\end{lemma}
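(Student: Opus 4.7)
The plan is to derive this lemma as an immediate consequence of Theorem \ref{cp_strong}, which was just established, together with Lemma \ref{classical_visc}. The key observation is that while Lemma \ref{no_cross_from_above} had to be proved by hand (using the very specific inf-convolution structure of the barriers from Corollary \ref{parabolic_super_barrier} and the monotone-approximation result Lemma 4.4 of \cite{BV}), the strengthened comparison theorem removes the need for any structural restriction on $\varphi$.

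First I would apply Lemma \ref{classical_visc} to conclude that $\varphi$, being a classical free boundary supersolution, is automatically a viscosity supersolution of (PME--D). After a translation in time (replacing $t$ by $t-t_0$) we are reduced to the setting where $u$ is a viscosity subsolution and $\varphi$ a viscosity supersolution on $\R^n\times[0,\infty)$ with $u(\cdot,0)\leq \varphi(\cdot,0)$. Theorem \ref{cp_strong}, which in contrast to Theorem \ref{thm:cp} does not require strict separation of initial data, then yields $u(\cdot,t)\leq\varphi(\cdot,t)$ for all $t\geq 0$, which translates back to the claimed inequality for all $t>t_0$.

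A minor subtlety is that $\varphi$ is a priori only defined on a cylinder-like domain $\Sigma$ rather than on all of $\R^n\times[0,\infty)$. This can be handled by noting that finite propagation speed for $u$ (Corollary \ref{finite_propagation}) confines the relevant comparison to a compact subset of $\Sigma$ on any finite time slice, so only the values of $\varphi$ in a neighborhood of $\mathrm{supp}\,u$ enter the comparison. I do not anticipate a genuine obstacle here -- the heart of the matter was the strengthened Comparison Principle, and the present lemma is essentially a packaged corollary convenient for the free-boundary convergence arguments in Section 3.
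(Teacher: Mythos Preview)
Your proposal is correct and follows essentially the same route as the paper: invoke Lemma~\ref{classical_visc} to recognize $\varphi$ as a viscosity supersolution, then appeal to the strengthened comparison principle Theorem~\ref{cp_strong}. The paper's proof differs only cosmetically in that it inserts an intermediate viscosity solution $v$ with the same initial data as $\varphi$, applying Theorem~\ref{cp_strong} to get $v\leq\varphi$ and Theorem~\ref{thm:cp} to get $u\leq v$; your direct application of Theorem~\ref{cp_strong} to the pair $(u,\varphi)$ is equally valid and slightly more streamlined.
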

\begin{proof}
Let us first replace $\varphi$ by a viscosity solution of (PME--D) with the same initial data which we denote $v$.  Since $\varphi$ is a viscosity supersolution by Lemma \ref{classical_visc}, we have that $\varphi \geq v$ by Theorem \ref{cp_strong}. Finally, $u \leq v$ by Theorem \ref{thm:cp}.
\end{proof}


\section{Convergence to Equilibrium}
We begin by discussing the set of equilibrium solutions to (PME--D) and reviewing some known results.  Since by Theorem \ref{exists_unique}, the unique viscosity solution coincides with the continuous weak solution, we may invoke the results of \cite{BH} which are stated in terms of weak solutions.    

The set of equilibrium solutions and uniform convergence of solutions to the equilibrium are established in \cite{BH}. We state below a restricted version of the result for our purposes.
\begin{thm}\label{equilibrium_solution} [Theorem 5.1, \cite{BH}]
Functions of the form $(C - \Phi)_+$ with $C \in \mathbb R$ are equilibrium solutions for (PME--D), and given $u_0$, 
there exists a unique $C_\infty > 0$ such that $u(x,t)$ uniformly converges to $(C_\infty - \Phi)_+$ as $t\to\infty$.
\end{thm}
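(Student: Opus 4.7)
\medskip

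The plan has two parts: verifying that $(C-\Phi)_+$ is a (weak/viscosity) equilibrium, and then proving uniform convergence.

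\emph{Equilibrium profiles.} First, for the verification part, set $u \equiv (C-\Phi)_+$. Inside the positivity set $\{\Phi < C\}$, we have $\nabla u = -\nabla\Phi$ and $\Delta u = -\Delta\Phi$, so a direct substitution into (PME--D) gives
\[
(m-1)u(-\Delta\Phi) + |\nabla\Phi|^2 + (-\nabla\Phi)\cdot\nabla\Phi + (m-1)u\,\Delta\Phi = 0,
\]
so $u_t = 0$ classically. At the free boundary $\{\Phi = C\}$ (assumed regular, which holds for strictly convex $\Phi$ provided $|\nabla\Phi|\neq 0$ there), the normal velocity formula gives $V = |\nabla u| + \nabla\Phi\cdot \nabla u/|\nabla u| = |\nabla\Phi| - |\nabla\Phi| = 0$, so $u$ is a stationary classical free boundary solution, hence a viscosity solution by Lemma \ref{classical_visc}, hence equal to the weak solution with this initial datum by Theorem \ref{exists_unique}.

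\emph{Selection of $C_\infty$ via mass conservation.} Returning to the density variable $\rho = \bigl(\tfrac{m-1}{m}u\bigr)^{1/(m-1)}$, integrating (\ref{main_eq_density}) (and using the decay/compact support and Neumann structure from \cite{BH}) yields $\frac{d}{dt}\int\rho\,dx = 0$. I would define $C_\infty$ as the unique constant such that
\[
\int_{\R^n}\Bigl(\tfrac{m-1}{m}(C_\infty - \Phi)_+\Bigr)^{1/(m-1)}dx \;=\; \int_{\R^n}\rho_0\,dx,
\]
which exists and is unique because the left side is continuous and strictly increasing in $C_\infty$ (here we use that the super-level sets of $\Phi$ are nested and nonempty for all large enough $C_\infty$, which follows from strict convexity, or at any rate from coercivity implied by the setting of \cite{BH}).

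\emph{Uniform convergence via the free energy.} The natural Lyapunov functional is
\[
\mathcal{F}(\rho) = \int_{\R^n} \Bigl(\tfrac{1}{m-1}\rho^m + \rho\,\Phi\Bigr)\,dx,
\]
whose formal dissipation along smooth positive solutions is
\[
\tfrac{d}{dt}\mathcal F(\rho(t)) \;=\; -\int_{\R^n}\rho\bigl|\nabla u + \nabla\Phi\bigr|^2\,dx \;\leq\; 0,
\]
with equality only when $u + \Phi$ is locally constant on the support of $\rho$, i.e., $u = (C-\Phi)_+$ for some $C$. Standard BV/compactness bounds together with the $L^\infty$ and equicontinuity estimates for weak solutions from \cite{BH} (see also Lemma \ref{equicontinuity}) give a subsequence $\rho(\cdot,t_k) \to \rho_\infty$ uniformly; the dissipation identity forces $\rho_\infty$ to be an equilibrium, and mass conservation pins down the constant to be $C_\infty$ above. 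Since the limit is uniquely determined, the whole family (not just a subsequence) converges.

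\emph{Main obstacle.} The delicate point is upgrading the natural $L^1$/entropy convergence to \emph{uniform} convergence. This requires equicontinuity of $\{u(\cdot,t)\}_{t\geq 1}$, which in the free-space setting is exactly the DiBenedetto-type regularity input used in \cite{BH}; I would invoke it as a black box rather than redo it. A second point to be careful about is justifying the formal computation of $\frac{d}{dt}\mathcal F$ for merely weak solutions, which is handled by the standard approximation by the smooth solutions with data $u_0+\varepsilon$ used in Lemma \ref{stability}, and then passing to the limit using the uniform bounds. Given the statement is essentially that of Theorem 5.1 of \cite{BH}, the cleanest exposition is to refer to that proof for the quantitative dissipation argument and to use the viscosity framework only to identify the limit profile with a classical $(C_\infty-\Phi)_+$.
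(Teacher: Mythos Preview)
The paper does not give its own proof of this theorem: it is cited as Theorem~5.1 of \cite{BH}, and the paper only offers a two-sentence gloss afterward. That gloss says the convergence is ``established based on a $L^1$--contraction result in \cite{BH}'' and that uniqueness of $C_\infty$ follows from mass conservation of $\rho$. So your verification that $(C-\Phi)_+$ is stationary and your selection of $C_\infty$ by mass conservation match the paper's discussion exactly.

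Your convergence argument, however, goes through the free-energy/entropy dissipation route (the Lyapunov functional $\mathcal F(\rho)=\int(\tfrac{1}{m-1}\rho^m+\rho\Phi)$), which is the approach of \cite{jose_et_al} rather than the $L^1$--contraction mechanism the paper attributes to \cite{BH}. Both are legitimate and well-known paths to the same conclusion: $L^1$--contraction gives compactness of the orbit and identifies the $\omega$-limit set as equilibria directly from the semigroup structure, while your entropy approach gets there via monotonicity of $\mathcal F$ and characterization of zero-dissipation states. The entropy route has the advantage of also yielding rates under additional convexity assumptions on $\Phi$ (as in \cite{jose_et_al}), which the bare contraction argument does not; on the other hand the contraction argument is more self-contained and does not require justifying the dissipation identity at the weak level, which you correctly flag as the delicate step. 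Your identification of the DiBenedetto equicontinuity as the tool to upgrade $L^1$ to uniform convergence is exactly right and is what \cite{BH} does as well.
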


It is fairly immediate that the set $\{ (C - \Phi)_+, C \in \mathbb R\}$ is contained in the set of equilibrium solutions; the converse containment and the convergence statement are established based on a $L^1$--contraction result in \cite{BH}.  The uniqueness of the equilibrium solution comes from the fact that due to its divergence form, the density function  $\rho(\cdot,t)$ given in (\ref{density}) preserves its $L^1$--norm over time.  It follows that the equilibrium solution is determined by the condition
$$
\int [u_0]^{1/(m-1)}(x) dx = \int [u_{\infty}]^{1/(m-1)}(x) dx.
$$

We can now state our free boundary convergence theorem.

\begin{thm}\label{conv_fb}
Let $u_{\infty}(x)$ be the unique equilibrium solution given by Theorem \ref{equilibrium_solution}.  Then $\Gamma_t(u)$ uniformly converges to $\Gamma(u_\infty)$ in the Hausdorff distance, as $t\to\infty$.
\end{thm}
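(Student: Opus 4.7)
The plan is to verify both halves of Hausdorff convergence. Uniform convergence $u(\cdot,t)\to u_\infty$ (Theorem \ref{equilibrium_solution}) is the starting point. Strict convexity of $\Phi$ yields $|\nabla\Phi|>0$ away from its unique minimum (which lies strictly inside $\Omega(u_\infty)$), so $\Gamma(u_\infty)=\{\Phi=C_\infty\}$ is a smooth hypersurface with non-vanishing normal $\nabla\Phi/|\nabla\Phi|$, and $u_\infty$ is bounded away from $0$ on any compact subset of $\Omega(u_\infty)$.

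\emph{Step 1 (outer inclusion $\Gamma_t(u)\subset N_\varepsilon(\Gamma(u_\infty))$).}  I would proceed by contradiction. If $t_n\to\infty$ and $x_n\in\Gamma_{t_n}(u)$ with $\text{dist}(x_n,\Gamma(u_\infty))\geq\varepsilon$, then comparison with a fixed stationary supersolution $V_C=(C-\Phi)_+$ for $C$ large enough that $V_C\geq u_0$ confines the supports to the compact set $\{\Phi\leq C\}$, so we may pass to a subsequence $x_n\to x^*$.  If $x^*\in\Omega(u_\infty)$, then $u_\infty(x^*)>0$ and uniform convergence gives $u(x_n,t_n)\to u_\infty(x^*)>0$, contradicting $u(x_n,t_n)=0$.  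In the remaining case $\Phi(x^*)>C_\infty+\eta$ for some $\eta>0$, I would invoke Corollary \ref{parabolic_super_barrier} to produce, for each $n$, a classical supersolution $\tilde H_n$ on a small parabolic cylinder $Q_\alpha(x^*,t_n)=B_\alpha(x^*)\times[t_n-\alpha,t_n]$ whose zero set at time $t_n$ is a ball $B(x^*,r)$ with $r>0$ fixed independently of $n$.  Choose $\alpha$ small enough that $\overline{Q_\alpha(x^*,t_n)}\subset\{\Phi>C_\infty+\eta/2\}$; uniform convergence then forces $u\leq\delta_n\to 0$ on the parabolic boundary of $Q_\alpha(x^*,t_n)$, while the construction ensures $\tilde H_n$ is bounded below by a fixed positive constant there.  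For $n$ large, $u\leq\tilde H_n$ on the parabolic boundary, and Lemma \ref{comp_above} extends this to the interior, forcing $u\equiv 0$ on $B(x^*,r)$ at time $t_n$.  Since $x_n\to x^*$, for $n$ large $x_n\in B(x^*,r/2)$ and a whole neighborhood of $x_n$ lies in the zero set, contradicting $x_n\in\partial\Omega_{t_n}(u)$.

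\emph{Step 2 (inner inclusion $\Gamma(u_\infty)\subset N_\varepsilon(\Gamma_t(u))$).}  For $y\in\Gamma(u_\infty)$, smoothness of $\Gamma(u_\infty)$ provides approximating points $p\in\Omega(u_\infty)$ and $q\notin\overline{\Omega(u_\infty)}$ with $|y-p|,|y-q|<\varepsilon/2$.  Uniform convergence together with $u_\infty(p)>0$ gives $p\in\Omega_t(u)$ for $t$ large, while Step 1 (applied at scale $\varepsilon/4$) gives $q\notin\overline{\Omega_t(u)}$ for $t$ large.  Continuity of $u(\cdot,t)$ along the segment $[p,q]$ then yields a free-boundary point of $u(\cdot,t)$ within $N_\varepsilon(y)$.

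\emph{Main obstacle.}  The second case of Step 1 — preventing $\Omega_t(u)$ from persistently reaching into $\{\Phi>C_\infty+\eta\}$ — is the technical heart.  Uniform $L^\infty$ smallness of $u$ outside $\overline{\Omega(u_\infty)}$ does not directly give $u\equiv 0$ there, and one must rule out slowly decaying tentacles of the positivity set.  The Corollary \ref{parabolic_super_barrier} barrier has to be tuned so that simultaneously (i) its zero set at final time is a ball of fixed radius around $x^*$, (ii) it dominates the prescribed $L^\infty$-tail $\delta_n$ on the entire parabolic boundary, and (iii) the hyperbolic rescaling accommodates the drift $\vec b=\nabla\Phi(x^*)$ that shifts the barrier's center (see Remark \ref{perturbed_velocity}).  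Strict convexity of $\Phi$ is indispensable because it supplies a uniform positive lower bound on $|\nabla\Phi|$ outside any neighborhood of the minimum, which is precisely the non-degeneracy needed for the local barrier construction and for the inward drift to deplete small-but-positive values of $u$.
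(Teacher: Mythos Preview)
Your decomposition into the two Hausdorff inclusions is natural, and Step~2 together with the ``interior'' case of Step~1 ($x^*\in\Omega(u_\infty)$) are correct and match the paper's treatment.  The gap is in the ``exterior'' case of Step~1.

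The barriers produced by Corollary~\ref{parabolic_super_barrier} are rescaled, drift-shifted inf-convolutions of the spherical supersolutions $H(x,t)=A(|x|+\omega t-B)_+$ of Lemma~\ref{sphere_symm}.  Such a function is \emph{annular}: it vanishes identically on an inner ball, and that inner ball is present already at the initial time of the cylinder (indeed it is largest there, since $\omega>0$ makes the zero set shrink).  Hence your two requirements on $\tilde H_n$ are incompatible: if the zero set at the final time $t_n$ is to contain a ball $B(x^*,r)$, then the zero set at the bottom $t_n-\alpha$ contains an even larger ball about (a drift-shift of) $x^*$, and on that portion of the parabolic boundary $\tilde H_n\equiv 0$.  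The barrier therefore cannot dominate the positive tail $\delta_n$ there, and the comparison Lemma~\ref{comp_above} cannot be invoked unless you already know $u(\cdot,t_n-\alpha)\equiv 0$ on that ball --- which is precisely what you are trying to prove.  In short, these barriers confine free boundaries but do not extinguish small positive values; your proposal is implicitly assuming a Caffarelli--Friedman-type non-degeneracy lemma (``$u$ small on a ball $\Rightarrow$ $u=0$ on a smaller ball after a waiting time'') that is not available here.

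The paper circumvents this by never attempting a pointwise extinction argument.  Instead it runs a forward-in-time continuation: it tracks the containment $\Omega_t\subset\Phi_{\{\leq C_\infty+K\tilde\eta(t)\}}$, defines $T^*$ as the last time this holds, and at $T^*$ places the annular barrier with its center $x_1$ on the \emph{exterior} side of $\partial\Phi_{\{\leq C_\infty+K\tilde\eta(T^*)\}}$.  The barrier's inner zero ball then sits in a region where $u(\cdot,T^*)=0$ is already guaranteed by the definition of $T^*$, so the comparison is legitimate at the bottom.  The computation shows the drift $\vec b=\nabla\Phi$ forces the barrier's free boundary to move \emph{inward} (condition $\omega-c\Delta<0$), and hence $\Gamma_t(u)$ retreats faster than $\tilde\eta$ allows, contradicting maximality of $T^*$.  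The essential difference from your plan is that the paper uses the barrier to control the \emph{velocity} of an already-located free boundary, not to create a zero set where none is known.
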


\begin{proof}
It will be convenient to look at the sublevel sets 
\[\Phi_{\{\leq r\}}:= \{x: \Phi(x) \leq r\}.\]
We note that since $\Phi$ is strictly convex, the sublevel sets 
 with $r \in \mathbb R$ start from a point (where the global minimum of $\Phi$ is achieved) and exhaust $\mathbb R^n$ (since $|\nabla \Phi| > 0$ except at the minimum).  Let $\e > 0$.   Invoking continuity of $\Phi$ if necessary, it is sufficient to show that there exists a $T > 0$ such that 
 \[ \Gamma_t(u) \subset \Phi_{\{\leq C_\infty + \e\}} \setminus \Phi_{\{\leq C_\infty - \e\}}\]
 for all $t \geq T$.  
 
$\Gamma_t(u) \subset [\Phi_{\{\leq C_\infty - \e\}}]^c$: This is the statement that the free boundary converges from the inside and is immediate from the uniform convergence to $u_\infty$.  Indeed, uniform convergence implies that e.g., there exists $T > 0$ such that $u(x, t) \geq \e/2$ for all $(x, t) \in \Phi_{\{\leq C_\infty - \e\}} \times [T, \infty)$; it necessarily follows then that $\Gamma_t(u)$ lies outside $\Phi_{\{ \leq C_\infty - \e\}}$ since otherwise by continuity we can find a sufficiently small ball inside $\Phi_{\{ \leq C_\infty - \e\}}$ where $u(x, t) < \e/2$, contradicting the definition of $T$.

$\Gamma_t(u) \subset \Phi_{\{\leq C_\infty + \e\}}$: In principle this should follow from a ``dual'' sort of argument, i.e., some statement to the effect that if $y \in \Gamma_t(u)$, then there exists some sufficiently small $\delta > 0$ such that $u(\cdot, t)$ is ``large'' in $B_\delta(y)$.  (E.g., an analogue of Corollary 2.2 in \cite{CF} for (PME--D) would suffice for us.)  We will instead directly make a comparison argument to show that the potential appropriately ``pulls'' the free boundary inwards, taking advantage of the explicit form of the supersolutions constructed in Corollary \ref{parabolic_super_barrier}.   

\begin{itemize}

\item The uniform convergence of $u(\cdot, t)$ to $(C_\infty - \Phi)_+$ means that there exists some continuous decreasing function $\eta(t) \rightarrow 0$ such that 
\[ u(x, t) \leq \eta(t), ~~~\mbox{for } x \notin \Phi_{\{ \leq C_\infty\}}.\] 
Let us define 
\begin{equation}\label{defn_T} T^* = \sup \left\{t: \Omega_t \subset \Phi_{ \leq C_\infty + K\tilde \eta(t)}\right\},\end{equation}
where $\tilde \eta$ is a smoothed version of $\eta$ so that it is differentiable (obtained e.g., by convolving with a small smooth bump function) and $K > 0$ is some (large) constant.
\end{itemize}

Let us first make a heuristic, non--rigorous argument to the effect that $T^* = \infty$.  Let 
\[x_* \in \partial\Phi_{\{\leq C_\infty + r(T^*)\}} \cap \Gamma_{T^*}(u); \]
such an $x_*$ exists by the minimality of $r(T^*)$, and morally it represents the worst case scenario: $x_*$ is the furthest away from where it should be among all points $x \in \Gamma_{T^*}(u)$.  We note that $\nabla \Phi(x_*)$ and $\nabla u(x_*)$ point in the outward and inward normal directions of $\Gamma_{T^*}(u)$, respectively, and hence have opposite sign.  We conclude then that the normal velocity of the boundary at $x_*$ is 
\[ V(x_*) = |\nabla u(x_*)| + \nabla \Phi \cdot \frac{\nabla u(x_*)}{|\nabla u(x_*)|} = |\nabla u(x_*)| - |\nabla \Phi(x_*)|.\]
If we assume for the moment that $|\nabla u(x_*)| \approx u(x_*) \leq \eta(T^*)$, and that $|\nabla \Phi(x_*)| \geq \Delta$, then we see that 
\[ V(x_*) \lesssim \eta(T^*) - \Delta.\]
Now if it can be arranged so that 
\[ \eta(T^*) - \Delta < \tilde \eta^\prime(t), \]
then we can conclude that the most outlying point of $\Gamma_{T^*}$ is moving inwards at a faster rate than indicated by the rate defining $T^*$, thus yielding a moral contradiction to the maximality of $T^*$.  

\begin{figure}
\center{\includegraphics[height=4in,width=4in]{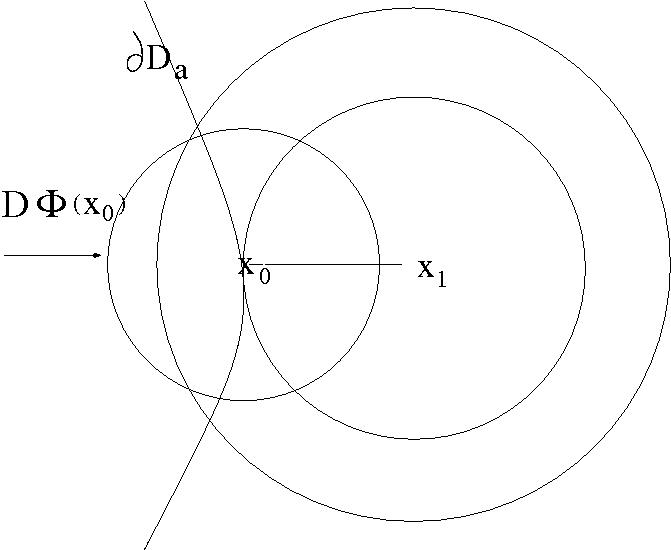}}\caption{The construction of barriers.  Here $\partial D_a = \partial \Phi_{\{ \leq C_\infty + K \tilde \eta(T^*)\}}$.}\label{fig_final_thm}
\end{figure}

A rigorous version of the argument will be established by invoking Lemma \ref{comp_above} to compare $u$ with a (classical free boundary) supersolution constructed in Corollary \ref{parabolic_super_barrier}, and thus all considerations regarding the size of $u$, $\nabla u$ and the boundary velocity will be transferred to the relevant supersolution.  These supersolutions are rescaled and translated versions of spherically symmetric supersolutions of (PME) supported in annuli.  
We remark that since we are not establishing a precise quantitative rate, the precise form given in \eqref{defn_T} is only for convenience: $\eta(t)$ sets a convenient small scale.  

\begin{itemize}
\item We will now take $x_0 \in \partial \Phi_{\{ \leq C_\infty + K\tilde \eta(T^*)\}}$ to be arbitrary.  Let us first describe the geometric setting in which we will make our comparison argument.  Since $\partial \Phi_{\leq C_\infty + K\tilde \eta(T^*)}$ is convex, there is an exterior ball at $x_0$ which is centered at some $x_1$, which we denote $B_{\alpha}(x_1)$.  The comparison argument takes place in the parabolic cylinder 
\[ \Sigma = [B_{\alpha}(x_1) \cap B_{\alpha^\prime}(x_0)] \times [T^*, T^*+\tau].\]
(See Figure \ref{fig_final_thm}.)  Here $\alpha^\prime$ is sufficiently small so that $u(x, T^*) \lesssim \eta(T^*)$ in $B_{\alpha^\prime}(x_0)$ and $\alpha$ is correspondingly small, and, finally, $\tau$ is sufficiently small in a way to be computed shortly.

\item By translation, we will assume without loss of generality that the relevant barriers are centered at $(0, 0)$ (of course, really, $t_0 = T^*$, and we shall recover $T^*$ when it is relevant) and we will denote by e.g., $\Phi = \Phi(x_0)$, with $x_0$ as before and taken to be the point at which the relevant Taylor expansion is performed.  Now returning to the proof of Corollary \ref{parabolic_super_barrier}, we see that solving the relevant minimization problem gives the explicit expression 
\[ \underline H(x, t; C\alpha) = Ae^{C\alpha t} (|x - (C\alpha(1-t)) \hat x| + \omega t - B)_+,\]
where we recall that $C$ is a constant depending on the $C^2$--norm of $\Phi$ and $B$ is a number of order unity.  Thus, 
\[ \begin{split} \tilde H(x, t) &= \alpha \underline H(\alpha^{-1}(x + \vec bt), \alpha^{-1} t; C\alpha)\\
&= A e^{C\alpha t} (|x -  C\alpha(1-t) \hat x + \vec b t| + \omega t - B\alpha)_+\\
&= A e^{C\alpha t} (~~|(|x| - C\alpha(1-t))\hat x + \vec b t| + \omega t - B\alpha~~)_+,
\end{split}\]
where $\vec b = \nabla \Phi$.  So we need to ensure that
\begin{equation}\label{fcn_mag} \begin{split} \tilde H(x, t) &\geq A e^{C\alpha t} (~~[|x| + \omega t] - [C\alpha(1 - t) + |\vec b|t + B\alpha]~~)\\
&\gtrsim \eta(T^*)
\end{split}\end{equation}
on the parabolic boundary $\partial \Sigma$.

\item
Next we compute the free boundary velocity of $\tilde H$.  We have 
\[ \begin{split}
\tilde H_t &= C\alpha \tilde H + A e^{C \alpha t} \left(\omega + \frac{d}{dt} |(|x| - C\alpha(1-t))\hat x + \vec b t| \right)\\
&= C\alpha \tilde H + Ae^{C\alpha t} \left( \omega + \left\langle \widehat{y + \vec b t}, \alpha t \hat y + \vec b \right\rangle \right),
\end{split}\]
where $y= (|x| - C\alpha(1-t)) \hat x$ is a shrunken version of $x$, whereas 
\[ \nabla \tilde H = \nabla |y + \vec b t| = \widehat{y + \vec b t} \]
has absolute value equal to unity.  Thus
\begin{equation}\label{speed_detail}\begin{split}V(\tilde H) &= \frac{\tilde H_t}{|\nabla \tilde H|}\\
&= Ae^{C\alpha t} \left( C\alpha(|y + \vec b t| + \omega t - B \alpha) + \omega + \left\langle \widehat{y + \vec b t}, \alpha t \hat y + \vec b \right\rangle\right)\\
&= Ae^{C\alpha t} \left((1+C\alpha t) \omega + C\alpha |y + \vec b t|+ \frac{|y| + \alpha t^2}{|y + \vec b t|} \langle \hat y, \vec b \rangle + t\frac{\alpha |y| + |\vec b|^2}{|y + \vec b t|} - BC\alpha^2\right).
\end{split}\end{equation}
To ensure that locally the free boundary is shrinking faster than the rate given in the definition of $T^*$ we need that 
\[ V(\tilde H) < \tilde \eta^\prime(T^*).\]

\item We will now amalgamate our conditions and observe the correct choice of parameters.  Let us first uniformly bound
\[|\nabla \Phi| \gtrsim \Delta \mbox{~~~in~~~} \left[\Phi_{\{ \leq C_\infty\}}\right]^c.\]
We will take $t \leq \tau = O(\alpha)$ and note that e.g.,  
\[|y + \vec b t| \geq | |x| - C\alpha(1-t) - \delta t| \sim | |x| - (C + \delta) \alpha | + O(\alpha^2),\]
so the expression \eqref{speed_detail} becomes (with $A = 1$)
\begin{equation*} V(\tilde H) = \omega - c \langle \hat y, \vec b \rangle  + O(\alpha^2) \geq \omega - c \Delta + O(\alpha^2), \end{equation*}
for some constant $c > 0$ of order unity.  Here the negativity of $\langle \hat y, \vec b \rangle$ comes from the fact that $\nabla \Phi$ is pointing outwards, whereas the vector from $x_1$ to $x_0$ is pointing in the opposite direction (again see Figure \ref{fig_final_thm}).  If we also take $\tilde \eta^\prime(T^*)$ to be higher order in $\alpha$, then we arrive at the condition 
\begin{equation}\label{cond_2} \omega - c \Delta < 0.\end{equation}

Similarly, taking $\eta(T^*)$ also to be higher order in $\alpha$, the condition \eqref{fcn_mag} becomes
\begin{equation}\label{cond_1}  |x| + \omega \alpha > (C + B + \delta) \alpha + O(\alpha^2),\end{equation}
where $\delta$ is some local upper bound for $|\nabla \Phi|$: $|\vec b| \leq \delta$.

Writing $|x| = \lambda \alpha$, the conditions \eqref{cond_1} and \eqref{cond_2} finally become (for $\alpha$ sufficiently small and again with $A = 1$)
\[\lambda + \omega > C + B + \delta ~~~\mbox{and~~~} c\Delta > \omega,\]
which can be satisfied by taking $\lambda$ to be a large enough factor (this corresponds to inserting a factor $\lambda$ in the relevant hyperbolic (re)scaling).  

\item Since $\partial \Phi_{\{ \leq C_\infty + K\eta(T^*)\}}$ is compact, we can take $\alpha$ to be uniform over all relevant $x_0 \in \partial \Phi_{\{ \leq C_\infty + K\eta(T^*)\}}$ (recall that $\alpha$ basically needs to be small enough to ensure that $u(x, T^*) \lesssim \eta(T^*)$ in a ball of size $\alpha^\prime \sim \alpha$) thus yielding a contradiction to the definition of $T^*$.

\item We now state the result positively.  The argument in fact shows that if $T$ satisfies the condition 
\[\mathscr G(T):~~~ \Omega_{T} \subset \Phi_{\{\leq C_\infty + K\tilde \eta(T)\}},\]
then $t$ satisfies $\mathscr G(t)$ for all $t \geq T$.  Now by comparison against an equilibrium solution of the form given in Theorem \ref{equilibrium_solution} with a sufficiently large $C$, it is the case that we can choose $K > 0$ large enough so that there is some time $T$ satisfying $\mathscr G(T)$.  We can now conclude by choosing $T$ even larger (if necessary) so that $K \tilde \eta(T) \leq \varepsilon$.

\end{itemize}

\end{proof}

\section*{\large{Acknowledgments}} 
We thank Jose Carrillo for suggesting this problem and helpful discussions and communications.  We also thank IPAM for their hospitality during the Kinetic Transport program.   
I.~C.~Kim is supported by the  NSF DMS--0700732 and a Sloan Foundation fellowship. H.~K.~Lei is supported by the UCLA Dissertation Year Fellowship, under NSF DMS--0805486 and NSF DMS--1004735.
\hspace{16 pt}


\begin{thebibliography}{aaa}



\bibitem{CF} L.~A.~Caffarelli and A.~Friedman. \emph{Regularity of the free boundary of a gas flow in an $n$--dimensional porous medium.} Indiana Univ.~Math.~Journal, \textbf{29}, no.~3 (1980), 361-391.


\bibitem{BH}
M.~Bertsch and D.~Hilhorst.
\textit{A density dependent diffusion equation in population dynamics: stabilization to equilibrium.} SIAM J.~Math.~Anal.~\textbf{17}, no.~4 (1986), 863--883.

\bibitem{BV}
C.~Br\"andle and J.~L.~Vazquez.
\textit{Viscosity solutions for quailinear degenerate parabolic equations of porous medium type.}
Indiana Univ.~Math.~J., \textbf{54}, no.~3 (2005), 817--860.

\bibitem{CV}
L.~Caffarelli and J.~L.~Vazquez.
\textit{Viscosity solutions for the porous medium equation.}  Differential Equations: La Pietra 1996 (Florence), 13--26, Proc.~Sympos.~Pure Math., \textbf{65}, Amer.~Math.~Soc., Providence, RI (1999).

\bibitem{CIL} 
M. Crandall, H. Ishii and P. Lions, {\em{ User's guide to
viscosity solutions of second order partial differential equations.
}} {\bf 27} no. 1, (1992), 1-67.

\bibitem{DiB}
E.~DiBenedetto.
\textit{Continuity of weak solutions to a general porous medium equation.}
Indiana Univ.~Math.~J., \textbf{32} (1983), 83--118.



\bibitem{JGJ}
J.~A.~Carrillo, M.~P.~Gualdani, and A.~J\"ungel.
\textit{Convergence of an entropic semi--discretization for nonlinear Fokker--Planck equations in $\mathbb R^d$.}
Pub.~Mat. \textbf{52} (2008), 413--433.

\bibitem{JT}
J.~A.~Carrillo and G.~Toscani.
\textit{Asymptotic $L^1$--Decay of Solutions of the Porous Medium Equation to Self--Similarity.}
Indiana University Mathematics Journal, \textbf{49} (2000), 113--141.

\bibitem{jose_et_al}
J.~A.~Carrillo, A.~J\"ungel, P.~A. Markowich, G.~Toscani, A.~ Unterreiter.
\textit{Entropy dissipation methods for degenerate parabolic problems and generalized Sobolev inequalities.} Monatsh.~Math.~\textbf{133} (2001), 1--82.

\bibitem{Kim}
I.~C.~Kim.
\textit{Uniqueness and existence results on viscosity solutions of the Hele--Shaw and the Stefan problems.}
Arch.~Rat.~Mech.~Anal.~\textbf{168} (2003), 299--328.


\bibitem{LSU}
O.~A.~Ladyzenskaja, V.~A.~Solonnikov, N.~N.~Ural'ceva.
\textit{Linear and quasi--linear equations of parabolic type.}
AMS, Providence, RI (1968).

\bibitem{LV}
K. A. Lee and J. L. Vazquez, \textit{Geometrical properties of solutions of the Porous Medium Equation for large times,} Indiana Mathematics Journal {\bf 52} (2003), 991--1016.

\bibitem{max_principle}
M.~H.~Proter and H.~F.~Weinerger.
\textit{Maximum principles in differential equations.}
Springer--Verlag, New York (1984).

\bibitem{V} J. L.  Vazquez,  {\em{ The porous medium equation: Mathematical theory.}} Oxford Mathematical Monographs. The Clarendon Press, Oxford University Press, Oxford, 2007.


\end{thebibliography}
\end{document}